\documentclass[reqno,11pt]{article} 
\usepackage[margin=1in]{geometry}  
\usepackage{amsmath, amssymb}
\usepackage{amsthm} 
%
\newcommand{\Mod}[1]{\ (\textup{mod}\ #1)}
%
%
\theoremstyle{plain} 
\newtheorem{theorem}{\indent\sc Theorem}[section]
\newtheorem{lemma}[theorem]{\indent\sc Lemma}
\newtheorem{corollary}[theorem]{\indent\sc Corollary}
\newtheorem{proposition}[theorem]{\indent\sc Proposition}

\theoremstyle{definition} 
\newtheorem{definition}[theorem]{\indent\sc Definition}
\newtheorem{assumption}[theorem]{\indent\sc Assumption}

\newtheorem{remark}[theorem]{\indent\sc Remark}

%

%

\makeatletter
\def\address#1#2{\begingroup
\noindent\parbox[t]{7.8cm}{%
\small{\scshape\ignorespaces#1}\par\vskip1ex
\noindent\small{\itshape E-mail address}%
\/: #2\par\vskip4ex}\hfill%
\endgroup}%
\makeatother
%
\title{Siegel families with application to class fields} 
\author{
\textsc{Ja Kyung Koo, Dong Hwa Shin and Dong Sung Yoon$^*$} 
}
\date{} 
%

\begin{document}

\allowdisplaybreaks

\maketitle

\footnote{ 
2010 \textit{Mathematics Subject Classification}. Primary 11F46, Secondary 11G15.}
\footnote{ 
\textit{Key words and phrases}.
abelian varieties, class field theory, CM-fields, Shimura's reciprocity law, Siegel modular functions.}
\footnote{$^*$Corresponding author.}
\footnote{
\thanks{The second named author was
supported by Hankuk University of Foreign Studies Research Fund of
2016.} }

\begin{abstract}
We investigate certain families of meromorphic Siegel modular functions
on which Galois groups act in a natural way. By using Shimura's reciprocity law we construct some algebraic numbers in the ray class fields of CM-fields in terms of special values of functions in these Siegel families.
\end{abstract}

\section {Introduction}

For a positive integer $N$ let $\mathfrak{F}_N$ be the field of meromorphic modular functions of level $N$ (defined on $\mathbb{H}=
\{\tau\in\mathbb{C}~|~\mathrm{Im}(\tau)>0\}$) whose Fourier coefficients belong to the $N$th cyclotomic field. As is well known, $\mathfrak{F}_N$ is a Galois extension of $\mathfrak{F}_1$ whose Galois group is isomorphic to $\mathrm{GL}_2(\mathbb{Z}/N\mathbb{Z})/\{\pm I_2\}$
(\cite[$\S$6.1--6.2]{Shimura71}). Now, let $N\geq2$ and consider a set
\begin{equation*}
V_N=\{\mathbf{v}\in\mathbb{Q}^2~|~\textrm{$N$ is the
smallest positive integer for which}~N\mathbf{v}\in\mathbb{Z}^2\}
\end{equation*}
as the index set. We call a family $\{f_\mathbf{v}(\tau)\}_{\mathbf{v}\in V_N}$
of functions in $\mathfrak{F}_N$ a \textit{Fricke family} of level $N$ if
each $f_\mathbf{v}(\tau)$ depends only on $\pm\mathbf{v}\Mod{\mathbb{Z}^2}$ and satisfies
\begin{equation*}
f_\mathbf{v}(\tau)^\alpha=
f_{\alpha^T\mathbf{v}}(\tau)\quad(\alpha\in\mathrm{GL}_2(\mathbb{Z}/N\mathbb{Z})/\{\pm I_2\}),
\end{equation*}
where $\alpha^T$ means the transpose of $\alpha$.
For example, Siegel functions of one-variable form such a Fricke family of level $N$
(\cite[Proposition 1.3 in Chapter 2]{K-L}).
See also \cite{J-K-S} or \cite{K-Y}.
\par
Let $K$ be an imaginary quadratic field
with the ring of integers $\mathcal{O}_K$, and let $\mathfrak{f}$
be a proper nontrivial ideal of $\mathcal{O}_K$. We denote by $\mathrm{Cl}(\mathfrak{f})$ and $K_\mathfrak{f}$
the ray class group modulo $\mathfrak{f}$ and its corresponding ray class field
modulo $\mathfrak{f}$, respectively.
If $\{f_\mathbf{v}(\tau)\}_\mathbf{v}$
is a Fricke family of level $N$ in which every $f_\mathbf{v}(\tau)$ is holomorphic on $\mathbb{H}$, then we can assign to each ray class $\mathcal{C}\in\mathrm{Cl}(\mathfrak{f})$
an algebraic number $f_\mathfrak{f}(\mathcal{C})$ as a special value of a function in  $\{f_\mathbf{v}(\tau)\}_\mathbf{v}$. Furthermore, we attain by Shimura's reciprocity law that $f_\mathfrak{f}(\mathcal{C})$ belongs to $K_\mathfrak{f}$ and satisfies
\begin{equation*}
f_\mathfrak{f}(\mathcal{C})^{\sigma_\mathfrak{f}(\mathcal{D})}=f_\mathfrak{f}(\mathcal{CD})
\quad(\mathcal{D}\in\mathrm{Cl}(\mathfrak{f})),
\end{equation*}
where $\sigma_\mathfrak{f}$ is the Artin reciprocity map for $\mathfrak{f}$
(\cite[Theorem 1.1 in Chapter 11]{K-L}).
\par
In this paper, we shall define a Siegel family $\{h_M(Z)\}_M$ of level $N$
consisting of meromorphic Siegel modular functions of (higher) genus $g$ and level $N$,
which would be a generalization of a Fricke family of level $N$ in case $g=1$
(Definition \ref{Siegelfamily}).
It turns out that every Siegel family of level $N$ is induced from a
meromorphic Siegel modular function for the congruence subgroup $\Gamma^1(N)$
(Theorem \ref{phiNisomorphism}).
\par
Let $K$ be a CM-field and let $\mathfrak{f}=N\mathcal{O}_K$.
Given a Siegel family $\{h_M(Z)\}_M$ of level $N$,
we shall introduce a number $h_\mathfrak{f}(\mathcal{C})$
by a special value of a function in $\{h_M(Z)\}_M$
for each ray class $\mathcal{C}\in\mathrm{Cl}(\mathfrak{f})$ (Definition \ref{invariant}).
Under certain assumptions on $K$ (Assumption \ref{assumption})
we shall prove that if $h_\mathfrak{f}(\mathcal{C})$ is finite, then it
lies in the ray class field $K_\mathfrak{f}$ whose Galois conjugates are
of the same form (Theorem \ref{main} and Corollary \ref{Cor}).
To this end, we assign a principally polarized abelian variety
to each nontrivial ideal of $\mathcal{O}_K$, and apply Shimura's reciprocity law to
$h_\mathfrak{f}(\mathcal{C})$.

\section {Actions on Siegel modular functions}

First, we shall describe the Galois group between fields of meromorphic Siegel modular functions in a concrete way.
\par
Let $g$ be a positive integer, and let $\eta_g=\left[\begin{matrix}O_g&-I_g\\I_g&O_g\end{matrix}\right]$. For every commutative ring $R$ with unity
we denote by
\begin{eqnarray*}
\mathrm{GSp}_{2g}(R)&=&\left\{\alpha\in \mathrm{GL}_{2g}(R)~|~\alpha^T \eta_g\alpha=\nu(\alpha)\eta_g~\textrm{with}~\nu(\alpha)\in R^\times\right\},\\
\mathrm{Sp}_{2g}(R)&=&\{\alpha\in\mathrm{GSp}_{2g}(R)~|~\nu(\alpha)=1\}.
\end{eqnarray*}
Let
\begin{equation*}
G=\mathrm{GSp}_{2g}(\mathbb{Q}),
\end{equation*}
and let
$G_{\mathbb{A}}$ be the adelization of $G$, $G_0$ its non-archimedean part and $G_\infty$ its archimedean part.
One can extend the multiplier map $\nu:G\rightarrow\mathbb{Q}^\times$ continuously to the map $\nu:G_\mathbb{A}\rightarrow\mathbb{Q}_\mathbb{A}^\times$, and set
\begin{eqnarray*}
G_{\infty+}=\{\alpha\in G_\infty~|~\nu(\alpha)>0\},\quad
G_{\mathbb{A}+}=G_0G_{\infty+},\quad
G_+=G\cap G_{\mathbb{A}+}.
\end{eqnarray*}
Furthermore, let
\begin{eqnarray*}
\Delta&=&\left\{\left[\begin{matrix}I_g&O_g\\O_g&s I_g\end{matrix}\right]
~|~s\in\prod_p\mathbb{Z}_p^\times\right\},\\
U_1&=&\prod_p\mathrm{GSp}_{2g}(\mathbb{Z}_p)\times G_{\infty+},\\
U_N&=&\{x\in U_1~|~x_p\equiv I_{2g}\Mod{N\cdot M_{2g}(\mathbb{Z}_p)}~\textrm{for all rational primes $p$}\}
\end{eqnarray*}
for every positive integer $N$.
Then we have
\begin{equation*}
U_N\unlhd U_1\leq G_{\mathbb{A}+}\quad\textrm{and}\quad
G_{\mathbb{A}+}=U_N\Delta G_{+}
\end{equation*}
(\cite[Lemma 8.3 (1)]{Shimura00}).
\par
Note that the group $G_{\infty+}$
acts on the Siegel upper half-space $\mathbb{H}_g=\{Z\in M_g(\mathbb{C})~|~
Z^T=Z,~\mathrm{Im}(Z)~\textrm{is positive definite}\}$ by
\begin{equation*}
\alpha(Z)=
(AZ+B)(CZ+D)^{-1}\quad(\alpha\in G_{\infty+},~Z\in\mathbb{H}_g),
\end{equation*}
where $A,B,C,D$ are $g\times g$ block matrices of $\alpha$.
Let $\mathcal{F}_N$ be the field
of meromorphic Siegel modular functions of genus $g$ for the congruence subgroup
\begin{equation*}
\Gamma(N)=\left\{\gamma\in\mathrm{Sp}_{2g}(\mathbb{Z})~|~
\gamma\equiv I_{2g}\Mod{N\cdot M_{2g}(\mathbb{Z})}\right\}
\end{equation*}
of the symplectic group $\mathrm{Sp}_{2g}(\mathbb{Z})$ whose Fourier coefficients belong to the $N$th cyclotomic field $\mathbb{Q}(\zeta_N)$ with $\zeta_N=e^{2\pi i/N}$.
That is, if $f\in\mathcal{F}_N$, then
\begin{equation*}
f(Z)=\frac{\sum_{h}c(h)e(\mathrm{tr}(hZ)/N)}{\sum_{h}d(h)e(\mathrm{tr}(hZ)/N)}\quad\textrm{for some}~c(h),d(h)\in \mathbb{Q}(\zeta_N),
\end{equation*}
where the denominator and numerator of $f$ are Siegel modular forms of the same weight, $h$ runs over all $g\times g$ positive semi-definite symmetric matrices over half integers with integral diagonal entries,
and $e(w)=e^{2\pi i w}$ for $w\in\mathbb{C}$ (\cite[Theorem 1 in $\S$4]{Klingen}).
Let
\begin{equation*}
\mathcal{F}=\bigcup_{N=1}^\infty \mathcal{F}_N.
\end{equation*}

\begin{proposition}\label{Siegel-action}
There exists a homomorphism $\tau:G_{\mathbb{A}+}\rightarrow \mathrm{Aut}(\mathcal{F})$ satisfying the following properties: Let
$f(Z)=\frac{\sum_{h}c(h)e(\mathrm{tr}(hZ)/N)}{\sum_{h}d(h)e(\mathrm{tr}(hZ)/N)}\in\mathcal{F}_N$.
\begin{itemize}
\item[\textup{(i)}] If $\alpha\in G_{+}=
\{\alpha\in G~|~\nu(\alpha)>0\}$, then
\begin{equation*}
f^{\tau(\alpha)}=f\circ\alpha.
\end{equation*}
\item[\textup{(ii)}]
If $\beta=\left[\begin{matrix}I_g&O_g\\O_g&s I_g\end{matrix}\right]
\in\Delta$ and $t$ is a positive integer such that $t\equiv s_p \pmod{N\mathbb{Z}_p}$
for all rational primes $p$,
then
\begin{equation*}
f^{\tau(\beta)}=\frac{\sum_{h}c(h)^\sigma e(\mathrm{tr}(hZ)/N)}{\sum_{h}d(h)^\sigma e(\mathrm{tr}(hZ)/N)},
\end{equation*}
where $\sigma$ is the automorphism of $\mathbb{Q}(\zeta_N)$ given by $\zeta_N^\sigma=\zeta_N^t$.
\item[\textup{(iii)}] For every positive integer $N$ we have
\begin{equation*}
\mathcal{F}_N=\{f\in\mathcal{F}~|~ f^{\tau(x)}=f~\textrm{for all}~x\in U_N\}.
\end{equation*}
\item[\textup{(iv)}] $\ker(\tau)=\mathbb{Q}^\times G_{\infty+}$.
\end{itemize}
\end{proposition}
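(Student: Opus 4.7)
The plan is to build $\tau$ piece by piece on the three factors appearing in the decomposition $G_{\mathbb{A}+}=U_N\Delta G_+$ and then glue. For $\alpha\in G_+$, composition $f\mapsto f\circ\alpha$ preserves $\mathcal{F}$, possibly raising the level by a factor depending on the denominators of $\alpha$. For $\beta\in\Delta$ of the shape in (ii), the coefficient-wise Galois action $\sigma:\zeta_N\mapsto\zeta_N^t$ preserves $\mathcal{F}_N$ because $\Gamma(N)$ is defined over $\mathbb{Q}$, so the space of Siegel modular forms of given weight for $\Gamma(N)$ is stable under the action of $\mathrm{Gal}(\mathbb{Q}(\zeta_N)/\mathbb{Q})$ on Fourier coefficients; well-definedness of the Galois element attached to $\beta$ is immediate from the congruence conditions and the Chinese remainder theorem. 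For $u\in U_N$ one prescribes $\tau(u)$ to act trivially on $\mathcal{F}_N$; consistency across levels uses that $U_M\subset U_N$ whenever $N\mid M$ and $\mathcal{F}_N\subset\mathcal{F}_M$.

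To glue these ingredients, given $x\in G_{\mathbb{A}+}$ and $f\in\mathcal{F}_N$, I would choose $N$ large enough, write $x=u\beta\alpha$ with $u\in U_N$, $\beta\in\Delta$, $\alpha\in G_+$, and set $f^{\tau(x)}=(f^{\tau(\beta)})\circ\alpha$. The main obstacle is to prove this is independent of the factorization. Given two such factorizations $u\beta\alpha=u'\beta'\alpha'$, comparing multipliers and non-archimedean components localizes $\alpha'\alpha^{-1}$ inside a product of $\Gamma(N)$ with rational scalars, while $\beta^{-1}\beta'$ corresponds to a Galois element whose effect on $f$ is absorbed by the $\Gamma(N)$-invariance of $f$ together with the triviality of the scalar action. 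The homomorphism property $\tau(xy)=\tau(x)\tau(y)$ then follows from the same bookkeeping applied to the product, once one verifies the cross relations describing how $\beta\in\Delta$ and $\alpha\in G_+$ interact under multiplication at the level of Fourier expansions.

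Properties (i) and (ii) hold by construction. For (iii), the containment of $\mathcal{F}_N$ in the fixed field of $U_N$ is built into the definition; the reverse inclusion is obtained by taking $f\in\mathcal{F}_M$ fixed by $U_N$, applying $\tau$ to integer symplectic matrices in $\Gamma(N)$ to force $\Gamma(N)$-invariance via (i), and applying $\tau$ to diagonal elements of $\Delta$ that are $\equiv I_{2g}\Mod{N}$ to push the Fourier coefficients into $\mathbb{Q}(\zeta_N)$ via (ii). For (iv), the inclusion $\mathbb{Q}^\times G_{\infty+}\subset\ker(\tau)$ is immediate: a rational scalar matrix $\lambda I_{2g}$ acts trivially by (i) because it induces the identity M\"obius transformation on $\mathbb{H}_g$, while any element of $G_{\infty+}$, viewed as a purely archimedean adele, lies in $U_N$ for every $N$ and hence acts trivially on every $\mathcal{F}_N$. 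The reverse containment is the most delicate step, and I would attack it by taking $x\in\ker(\tau)$, writing $x=u\beta\alpha$, and applying $\tau(x)$ to sufficiently many $f\in\mathcal{F}_N$ (varying $N$) to force $\alpha$ to be a rational scalar and $\beta$ to be trivial, thereby placing $x$ inside $\mathbb{Q}^\times G_{\infty+}$.
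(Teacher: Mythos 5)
The paper does not actually prove this proposition: its ``proof'' is a one-line citation of Shimura [\emph{Arithmeticity in the theory of automorphic forms}, Theorem 8.10], so what your sketch must be measured against is Shimura's construction, and against that standard it has a genuine gap. Your formal skeleton (define $\tau$ on $G_+$, $\Delta$, $U_N$, glue via $G_{\mathbb{A}+}=U_N\Delta G_+$, check independence of the factorization using $U_N\unlhd U_1$ and the fact that $G_+\cap U_N\Delta$ reduces to $\Gamma(N)$ up to sign) is the right outline, but it files under ``bookkeeping'' exactly the steps that carry all the mathematical content. Concretely: (a) that $f\mapsto f\circ\gamma$ for $\gamma\in\mathrm{Sp}_{2g}(\mathbb{Z})$ preserves the arithmetic rationality condition defining $\mathcal{F}_N$ (Fourier coefficients in $\mathbb{Q}(\zeta_N)$) is not automatic --- composing with a general symplectic matrix scrambles Fourier expansions, and there is no a priori reason the coefficients stay cyclotomic; (b) that the coefficientwise Galois action preserves $\mathcal{F}_N$ does not follow merely from ``$\Gamma(N)$ is defined over $\mathbb{Q}$''; and (c) the ``cross relations'' you defer, of the shape $(f\circ\gamma)^{\sigma_t}=f^{\sigma_t}\circ\gamma'$ with $\gamma'$ congruent to $\left[\begin{smallmatrix}I_g&O_g\\O_g&tI_g\end{smallmatrix}\right]\gamma\left[\begin{smallmatrix}I_g&O_g\\O_g&tI_g\end{smallmatrix}\right]^{-1}$ modulo $N$, are precisely the theorem: they are what make the glued map multiplicative. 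In genus $1$ these facts are proved by exhibiting explicit generators of the function field (Fricke/Siegel functions) and computing their $q$-expansions and transformation laws; in higher genus Shimura proves them via theta functions and the arithmetic of abelian varieties with CM (canonical models). Without some such concrete input --- e.g.\ a $q$-expansion principle plus the fact that $\mathcal{F}_N$ is generated by theta-constant quotients with controlled transformation behavior --- your gluing argument cannot be completed.

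The second gap is in (iv). The inclusion $\mathbb{Q}^\times G_{\infty+}\subseteq\ker(\tau)$ is indeed immediate from your construction, but the reverse inclusion is not reachable by the plan you state: to ``force $\alpha$ to be a rational scalar and $\beta$ to be trivial'' by evaluating on sufficiently many $f$, you must know that $\mathcal{F}_N$ is large enough to separate --- that it is the full function field of $\Gamma(N)\backslash\mathbb{H}_g$ and that the induced action of $\mathrm{GSp}_{2g}(\mathbb{Z}/N\mathbb{Z})/\{\pm I_{2g}\}$ on it is faithful. You cannot invoke the Galois-group description $\mathrm{Gal}(\mathcal{F}_N/\mathcal{F}_1)\simeq U_1/\pm U_N$ for this, since the paper derives that statement (display (2.1) and Proposition 2.2) \emph{from} the present proposition; using it here would be circular. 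Faithfulness again comes down to producing explicit functions (theta constants, or in Shimura's treatment, values at CM points) on which a putative nonscalar kernel element acts nontrivially. So the proposal is a correct roadmap of the standard construction but, at the three decisive points --- arithmetic stability of $\mathcal{F}_N$, the $\Delta$--$G_+$ compatibility, and faithfulness --- it assumes rather than proves what Shimura's Theorem 8.10 establishes.
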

\begin{proof}
See \cite[Theorem 8.10]{Shimura00}.
\end{proof}

Since
\begin{equation*}
U_N(\mathbb{Q}^\times G_{\infty+})/\mathbb{Q}^\times G_{\infty+}\simeq U_N/(U_N\cap\mathbb{Q}^\times G_{\infty+})\simeq
\left\{\begin{array}{ll}
U_1/\pm G_{\infty+} & \textrm{if $N=1$},\\
U_N/G_{\infty+} & \textrm{if $N>1$},
\end{array}\right.
\end{equation*}
we see by Proposition \ref{Siegel-action} (iii) and (iv) that $\mathcal{F}_N$ is a Galois extension of $\mathcal{F}_1$ with
\begin{equation}\label{Galoisgroup}
\mathrm{Gal}(\mathcal{F}_N/\mathcal{F}_1)\simeq U_1/\pm U_N.
\end{equation}

\begin{proposition}\label{functionfields}
We have
\begin{equation*}
\mathrm{Gal}(\mathcal{F}_N/\mathcal{F}_1)\simeq \mathrm{GSp}_{2g}(\mathbb{Z}/N\mathbb{Z})/\{\pm I_{2g}\}.
\end{equation*}
\end{proposition}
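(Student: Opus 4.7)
The starting point is the isomorphism $(\ref{Galoisgroup})$, $\mathrm{Gal}(\mathcal{F}_N/\mathcal{F}_1)\simeq U_1/\pm U_N$, which reduces the problem to producing an isomorphism $U_1/\pm U_N \simeq \mathrm{GSp}_{2g}(\mathbb{Z}/N\mathbb{Z})/\{\pm I_{2g}\}$. My plan is to first construct a surjective homomorphism $\phi: U_1 \twoheadrightarrow \mathrm{GSp}_{2g}(\mathbb{Z}/N\mathbb{Z})$ whose kernel is exactly $U_N$, and then descend by quotienting both sides by $\pm I_{2g}$.

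To define $\phi$, write $x = ((x_p)_p, x_\infty) \in U_1$ with $x_p \in \mathrm{GSp}_{2g}(\mathbb{Z}_p)$ and $x_\infty \in G_{\infty+}$. For each prime $p$, reduction modulo $N\mathbb{Z}_p$ gives a homomorphism $\mathrm{GSp}_{2g}(\mathbb{Z}_p) \to \mathrm{GSp}_{2g}(\mathbb{Z}/p^{v_p(N)}\mathbb{Z})$, which is trivial for $p \nmid N$; combining these via the Chinese remainder identification $\prod_{p\mid N}\mathrm{GSp}_{2g}(\mathbb{Z}/p^{v_p(N)}\mathbb{Z}) \simeq \mathrm{GSp}_{2g}(\mathbb{Z}/N\mathbb{Z})$, and discarding the archimedean component $x_\infty$, produces $\phi(x)$. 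It is evidently a group homomorphism, and by the very definition of $U_N$ its kernel equals $U_N$.

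The genuinely nontrivial step is surjectivity, which reduces by the Chinese remainder theorem to surjectivity of each local reduction $\mathrm{GSp}_{2g}(\mathbb{Z}_p) \twoheadrightarrow \mathrm{GSp}_{2g}(\mathbb{Z}/p^n\mathbb{Z})$. Given $\bar\alpha$ in the target with multiplier $\bar\nu \in (\mathbb{Z}/p^n\mathbb{Z})^\times$, I lift $\bar\nu$ to some $\nu \in \mathbb{Z}_p^\times$ and observe that the matrix $\left[\begin{matrix}I_g & O_g\\O_g & \nu I_g\end{matrix}\right] \in \mathrm{GSp}_{2g}(\mathbb{Z}_p)$ has multiplier $\nu$ and reduces to $\left[\begin{matrix}I_g & O_g\\O_g & \bar\nu I_g\end{matrix}\right]$; hence $\bar\alpha \cdot \left[\begin{matrix}I_g & O_g\\O_g & \bar\nu I_g\end{matrix}\right]^{-1}$ has trivial multiplier and lies in $\mathrm{Sp}_{2g}(\mathbb{Z}/p^n\mathbb{Z})$. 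The problem then reduces to the standard Hensel-type surjectivity $\mathrm{Sp}_{2g}(\mathbb{Z}_p) \twoheadrightarrow \mathrm{Sp}_{2g}(\mathbb{Z}/p^n\mathbb{Z})$, which can be obtained from the smoothness of the group scheme $\mathrm{Sp}_{2g}$ over $\mathbb{Z}$, or elementarily by lifting the usual elementary symplectic generators.

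Putting this together, $\phi$ induces $U_1/U_N \simeq \mathrm{GSp}_{2g}(\mathbb{Z}/N\mathbb{Z})$. To descend to the $\pm I_{2g}$ quotient, note that the adelic element with $-I_{2g}$ at every place lies in $U_1$ (since $\nu(-I_{2g}) = 1 > 0$ places it in $G_{\infty+}$ and $-I_{2g} \in \mathrm{GSp}_{2g}(\mathbb{Z}_p)$ for all $p$), and it maps under $\phi$ to $-I_{2g} \in \mathrm{GSp}_{2g}(\mathbb{Z}/N\mathbb{Z})$. Therefore $\phi$ matches the two subgroups $\pm U_N$ and $\{\pm I_{2g}\}$ and induces the required isomorphism $U_1/\pm U_N \simeq \mathrm{GSp}_{2g}(\mathbb{Z}/N\mathbb{Z})/\{\pm I_{2g}\}$. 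I expect the Hensel-type lifting step for $\mathrm{Sp}_{2g}$ to be the only step that requires real input; all remaining pieces are routine bookkeeping.
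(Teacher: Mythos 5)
Your proposal is correct, and its skeleton coincides with the paper's: starting from (\ref{Galoisgroup}), both construct the reduction homomorphism $U_1\rightarrow\mathrm{GSp}_{2g}(\mathbb{Z}/N\mathbb{Z})$ (the paper via a single integral matrix $A$ with $A\equiv\alpha_p\Mod{N\cdot M_{2g}(\mathbb{Z}_p)}$ for all $p$, you via the local reductions glued by the Chinese remainder theorem --- these give the same map), both identify the kernel with $U_N$, and both descend modulo $\pm$ exactly as you do with $-I_{2g}\in U_1$. The one genuine divergence is the surjectivity step, and there your treatment is in fact more careful than the paper's. The paper takes an \emph{arbitrary} integral preimage $B$ of $\beta\in\mathrm{GSp}_{2g}(\mathbb{Z}/N\mathbb{Z})$ and asserts that the congruence $B^T\eta_g B\equiv\nu(\beta)\eta_g\Mod{N\cdot M_{2g}(\mathbb{Z})}$ already places $B$ in $\mathrm{GSp}_{2g}(\mathbb{Z}_p)$ for $p\,|\,N$; but membership in $\mathrm{GSp}_{2g}(\mathbb{Z}_p)$ requires the \emph{exact} relation $B^T\eta_g B=\nu\eta_g$ over $\mathbb{Z}_p$, which an arbitrary lift need not satisfy once $g\geq2$: for instance $B=I_{2g}+pE$ with $E^T\eta_g+\eta_g E$ not a scalar multiple of $\eta_g$ (such $E$ exist for $g\geq 2$, e.g.\ $E=E_{12}$) reduces to $I_{2g}$ modulo $p$ yet is not conformal symplectic over $\mathbb{Z}_p$; only for $g=1$, where $\mathrm{GSp}_2=\mathrm{GL}_2$ and $E^T\eta_1+\eta_1 E=\mathrm{tr}(E)\eta_1$ identically, is the paper's shortcut harmless. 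Your route --- splitting off the multiplier by $\left[\begin{smallmatrix}I_g&O_g\\O_g&\nu I_g\end{smallmatrix}\right]$ and lifting the remaining $\mathrm{Sp}_{2g}(\mathbb{Z}/p^n\mathbb{Z})$ factor through $\mathrm{Sp}_{2g}(\mathbb{Z}_p)$ by smoothness of $\mathrm{Sp}_{2g}$ and Hensel's lemma --- supplies precisely the lifting argument the paper elides, so you have repaired rather than merely reproduced this step. Note also that your multiplier-splitting is the same trick the paper itself uses in Lemma \ref{Sp}; so an alternative, staying entirely within the paper's toolkit, is to lift the $\mathrm{Sp}$-part globally using the surjectivity of $\mathrm{Sp}_{2g}(\mathbb{Z})\rightarrow\mathrm{Sp}_{2g}(\mathbb{Z}/N\mathbb{Z})$ (the Rapinchuk reference) and then use a single integral matrix at all $p\,|\,N$, as the paper intended. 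Either way, your argument is complete; what it buys over the paper's is that every assertion in the surjectivity step is actually justified.
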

\begin{proof}
Let $\alpha\in U_1$. Take a matrix $A$ in $M_{2g}(\mathbb{Z})$ for which $A\equiv\alpha_p\Mod{N\cdot M_{2g}(\mathbb{Z}_p)}$ for all rational primes $p$.
Define a matrix $\psi(\alpha)\in M_{2g}(\mathbb{Z}/N\mathbb{Z})$ by the image of $A$ under the natural reduction $M_{2g}(\mathbb{Z})\rightarrow M_{2g}(\mathbb{Z}/N\mathbb{Z})$.
Then by the Chinese remainder theorem $\psi(\alpha)$ is well defined and independent of the choice of $A$.
Furthermore, let $t$ be an integer relatively prime to $N$ such that $t\equiv \nu(\alpha_p)\Mod{N\mathbb{Z}_p}$ for all rational primes $p$.
We then derive that
\begin{equation*}
t\eta_g\equiv\nu(\alpha_p)\eta_g\equiv \alpha_p^T \eta_g \alpha_p\equiv A^T\eta_g A\equiv\psi(\alpha)^T\eta_g\psi(\alpha)\Mod{N\cdot M_{2g}(\mathbb{Z}_p)}
\end{equation*}
for all rational primes $p$, and hence $\psi(\alpha)\in\mathrm{GSp}_{2g}(\mathbb{Z}/N\mathbb{Z})$.
Thus we obtain a group homomorphism
\begin{equation*}
\psi:U_1\rightarrow\mathrm{GSp}_{2g}(\mathbb{Z}/N\mathbb{Z}).
\end{equation*}
\par
Let $\beta\in\mathrm{GSp}_{2g}(\mathbb{Z}/N\mathbb{Z})$, and take a preimage $B$ of $\beta$ under the natural reduction $M_{2g}(\mathbb{Z})\rightarrow M_{2g}(\mathbb{Z}/N\mathbb{Z})$.
Since $\nu(\beta)\in(\mathbb{Z}/N\mathbb{Z})^\times$ and
\begin{equation*}
B^T \eta_g B\equiv \beta^T\eta_g \beta \equiv\nu(\beta)\eta_g\Mod{N\cdot M_{2g}(\mathbb{Z})},
\end{equation*}
$B$ belongs to $\mathrm{GSp}_{2g}(\mathbb{Z}_p)$ for every rational prime $p$ dividing $N$.
Let $\alpha=(\alpha_p)_p$ be the element of $\prod_p\mathrm{GSp}_{2g}(\mathbb{Z}_p)$ given by
\begin{equation*}
\alpha_p=\left\{
\begin{array}{ll}
B & \textrm{if $p\,|\,N$},\\
I_{2g}&\textrm{otherwise}.
\end{array}
\right.
\end{equation*}
We then see that $\alpha\in U_1$ and $\psi(\alpha)=\beta$.
Thus $\psi$ is surjective.
\par
Clearly, $U_N$ is contained in $\ker(\psi)$.
Let $\gamma\in \ker(\psi)$. Since
$\gamma_p\equiv I_{2g} \Mod{N\cdot M_{2g}(\mathbb{Z}_p)}$ for all rational primes $p$,
we get $\gamma\in U_N$, and hence $\ker(\psi)=U_N$.
Therefore $\psi$ induces an isomorphism $U_1/U_N\simeq \mathrm{GSp}_{2g}(\mathbb{Z}/N\mathbb{Z})$, from which we achieve by (\ref{Galoisgroup}) 
\begin{equation*}
\mathrm{Gal}(\mathcal{F}_N/\mathcal{F}_1)\simeq  U_1/\pm U_N \simeq\mathrm{GSp}_{2g}(\mathbb{Z}/N\mathbb{Z})/\{\pm I_{2g}\}.
\end{equation*}
\end{proof}

\begin{remark}\label{decomp}
We have the decomposition
\begin{equation*}
\mathrm{Gal}(\mathcal{F}_N/\mathcal{F}_1)\simeq \mathrm{GSp}_{2g}(\mathbb{Z}/N\mathbb{Z})/\{\pm I_{2g}\}\simeq G_N\cdot \mathrm{Sp}_{2g}(\mathbb{Z}/N\mathbb{Z})/\{\pm I_{2g}\},
\end{equation*}
where
\begin{equation*}
G_N=\left\{\left[
\begin{matrix}
I_g&O_g\\
O_g&\nu I_g
\end{matrix}\right]~|~\nu\in(\mathbb{Z}/N\mathbb{Z})^\times
\right\}.
\end{equation*}
By Proposition \ref{Siegel-action} one can describe the action of $\mathrm{GSp}_{2g}(\mathbb{Z}/N\mathbb{Z})/\{\pm I_{2g}\}$ on $\mathcal{F}_N$
as follows: \\
Let $f(Z)=\frac{\sum_{h}c(h)e(\mathrm{tr}(hZ)/N)}{\sum_{h}d(h)e(\mathrm{tr}(hZ)/N)}\in\mathcal{F}_N$.
\begin{itemize}
\item[\textup{(i)}]
An element
$\beta=\left[
\begin{matrix}
I_g&O_g\\
O_g&\nu I_g
\end{matrix}\right]$ of $G_N$ acts on $f$ by
\begin{equation*}
f^{\beta}=\frac{\sum_{h}c(h)^{\sigma} e(\mathrm{tr}(hZ)/N)}{\sum_{h}d(h)^{\sigma} e(\mathrm{tr}(hZ)/N)},
\end{equation*}
where $\sigma$ is the automorphism of $\mathbb{Q}(\zeta_N)$ satisfying $\zeta_N^{\sigma}=\zeta_N^\nu$.
\item[\textup{(ii)}]
 An element $\gamma$ of $\mathrm{Sp}_{2g}(\mathbb{Z}/N\mathbb{Z})/\{\pm I_{2g}\}$ acts on $f$ by
\begin{equation*}
f^\gamma=f\circ\gamma',
\end{equation*}
where $\gamma'$ is any preimage of $\gamma$ under the natural reduction $\mathrm{Sp}_{2g}(\mathbb{Z})\rightarrow \mathrm{Sp}_{2g}(\mathbb{Z}/N\mathbb{Z})/\{\pm I_{2g}\}$.
\end{itemize}
\end{remark}

\section {Siegel families of level $N$}

By making use of the description of $\mathrm{Gal}(\mathcal{F}_N/\mathcal{F}_1)$ in $\S$2
we shall introduce a generalization of a Fricke family in higher dimensional cases.
\par
Let $N\geq2$. For $\alpha\in M_{2g}(\mathbb{Z})$
we denote by $\widetilde{\alpha}$ its reduction modulo $N$.
Define a set
\begin{equation*}
\mathcal{V}_N=\left\{(1/N)\left[\begin{matrix}A^T\\B^T\end{matrix}\right]~|~
\alpha=
\left[\begin{matrix}A&B\\C&D
\end{matrix}\right]\in M_{2g}(\mathbb{Z})~\textrm{such that}~\widetilde{\alpha}\in\mathrm{GSp}_{2g}
(\mathbb{Z}/N\mathbb{Z})
\right\}.
\end{equation*}
Let $M$ be an element of $\mathcal{V}_N$
stemmed from $\alpha\in M_{2g}(\mathbb{Z})$ such that $\widetilde{\alpha}\in\mathrm{GSp}_{2g}(\mathbb{Z}/N\mathbb{Z})$, and let $\beta$
be an element of $M_{2g}(\mathbb{Z})$ satisfying $\widetilde{\beta}
\in\mathrm{GSp}_{2g}(\mathbb{Z}/N\mathbb{Z})$. 
Then it is straightforward that
$\beta^T M$ is also an element of $\mathcal{V}_N$ given by the product $\alpha\beta$.

\begin{definition}\label{Siegelfamily}
We call a family $\{h_M(Z)\}_{M\in\mathcal{V}_N}$
a \textit{Siegel family} of level $N$ if it satisfies the following properties:
\begin{itemize}
\item[(S1)] Each $h_M(Z)$ belongs to $\mathcal{F}_N$.
\item[(S2)] $h_M(Z)$ depends only on
$\pm M\Mod{M_{2g\times g}(\mathbb{Z})}$.
\item[(S3)] $h_M(Z)^\sigma=h_{\sigma^TM}(Z)$
for all $\sigma\in\mathrm{GSp}_{2g}(\mathbb{Z}/N\mathbb{Z})/\{\pm I_{2g}\}
\simeq\mathrm{Gal}(\mathcal{F}_N/\mathcal{F}_1)$.
\end{itemize}
By $\mathcal{S}_N$ we mean the set of such Siegel families of level $N$.
\end{definition}

\begin{remark}
Let $\{h_M(Z)\}_M\in \mathcal{S}_N$.
\begin{itemize}
\item[(i)] The property (S3)
yields a right action of the group $\mathrm{GSp}_{2g}(\mathbb{Z}/N\mathbb{Z})/\{\pm I_{2g}\}$ on  $\{h_M(Z)\}_M$.
\item[(ii)] Let $M=(1/N)\left[\begin{matrix}A^T\\B^T\end{matrix}\right]\in\mathcal{V}_N$, and so there is a matrix
$\alpha=\left[\begin{matrix}A&B\\C&D\end{matrix}\right]\in M_{2g}(\mathbb{Z})$ such that
$\widetilde{\alpha}\in\mathrm{GSp}_{2g}(\mathbb{Z}/N\mathbb{Z})$.
Considering $\widetilde{\alpha}$ as an element of $\mathrm{GSp}_{2g}(\mathbb{Z}/
N\mathbb{Z})/\{\pm I_{2g}\}$ we obtain
\begin{equation*}
h_{(1/N)\left[\begin{smallmatrix}
I_g\\O_g\end{smallmatrix}\right]}(Z)^{\widetilde{\alpha}}=
h_{(1/N)\alpha^T\left[\begin{smallmatrix}
I_g\\O_g\end{smallmatrix}\right]}(Z)=
h_M(Z).
\end{equation*}
Thus the action of $\mathrm{GSp}_{2g}(\mathbb{Z}/N\mathbb{Z})/\{\pm I_{2g}\}$
on $\{h_M(Z)\}_M$ is transitive.
\end{itemize}
\end{remark}

Let
\begin{equation*}
\Gamma^1(N)=\left\{\gamma\in\mathrm{Sp}_{2g}(\mathbb{Z})~|~
\gamma\equiv\left[\begin{matrix}I_g&O_g\\\mathrm{*}&I_g\end{matrix}\right]
\Mod{N\cdot M_{2g}(\mathbb{Z})}\right\},
\end{equation*}
and let $\mathcal{F}^1_N(\mathbb{Q})$ be the field of meromorphic
Siegel modular functions for $\Gamma^1(N)$ with rational Fourier coefficients.

\begin{lemma}\label{particular}
If $\{h_M(Z)\}_M\in\mathcal{S}_N$, then
$h_{\left[\begin{smallmatrix}(1/N)I_g\\O_g\end{smallmatrix}\right]}(Z)\in\mathcal{F}^1_N(\mathbb{Q})$.
\end{lemma}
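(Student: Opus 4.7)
The plan is to set $h_0(Z) := h_{M_0}(Z)$ with $M_0 = \left[\begin{smallmatrix}(1/N)I_g\\O_g\end{smallmatrix}\right]$, and then to verify the two defining conditions of $\mathcal{F}^1_N(\mathbb{Q})$ separately: (a) $h_0$ is invariant under $\Gamma^1(N)$, and (b) the Fourier coefficients of $h_0$ lie in $\mathbb{Q}$. In both steps I would play property (S3), which interprets the Galois action combinatorially on the index set, against the two explicit descriptions of the Galois action in Remark \ref{decomp}.

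For (a), take $\gamma\in\Gamma^1(N)$ and let $\widetilde{\gamma}$ be its image in $\mathrm{Sp}_{2g}(\mathbb{Z}/N\mathbb{Z})/\{\pm I_{2g}\}$. Remark \ref{decomp}(ii) gives $h_0^{\widetilde{\gamma}}=h_0\circ\gamma$, while (S3) gives $h_0^{\widetilde{\gamma}}=h_{\widetilde{\gamma}^T M_0}$. Since $\gamma\equiv\left[\begin{smallmatrix}I_g&O_g\\ *&I_g\end{smallmatrix}\right]\pmod{N}$, transposing yields $\gamma^T\equiv\left[\begin{smallmatrix}I_g& *^T\\O_g&I_g\end{smallmatrix}\right]\pmod{N}$, and the block product is
\[
\gamma^T M_0\;=\;\tfrac{1}{N}\left[\begin{matrix}A'\\ C'\end{matrix}\right]\;\equiv\;\tfrac{1}{N}\left[\begin{matrix}I_g\\ O_g\end{matrix}\right]\;=\;M_0\pmod{M_{2g\times g}(\mathbb{Z})}.
\]
By (S2) the value $h_{\gamma^T M_0}$ equals $h_{M_0}=h_0$, so $h_0\circ\gamma=h_0$. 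Since $\gamma\in\Gamma^1(N)$ was arbitrary, $h_0$ is $\Gamma^1(N)$-invariant; combined with (S1), this places $h_0$ in the field of meromorphic Siegel modular functions for $\Gamma^1(N)$.

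For (b), let $\nu\in(\mathbb{Z}/N\mathbb{Z})^\times$ and set $\beta=\left[\begin{smallmatrix}I_g&O_g\\O_g&\nu I_g\end{smallmatrix}\right]\in G_N$. A direct block computation shows $\beta^T M_0=M_0$, so by (S3), $h_0^\beta=h_{\beta^T M_0}=h_0$. On the other hand, Remark \ref{decomp}(i) describes $h_0^\beta$ by applying the Galois automorphism $\sigma_\nu:\zeta_N\mapsto\zeta_N^\nu$ to the Fourier coefficients of $h_0$. Thus every Fourier coefficient $c(h)$ of $h_0$ satisfies $c(h)^{\sigma_\nu}=c(h)$ for all $\nu\in(\mathbb{Z}/N\mathbb{Z})^\times$, hence lies in the fixed field $\mathbb{Q}(\zeta_N)^{\mathrm{Gal}(\mathbb{Q}(\zeta_N)/\mathbb{Q})}=\mathbb{Q}$.

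I do not expect a serious obstacle; the only subtlety is book-keeping with transposes and the mod-$N$ reduction, making sure the representative $\gamma\in\mathrm{Sp}_{2g}(\mathbb{Z})$ is used correctly in Remark \ref{decomp}(ii) and that $\gamma^T M_0\equiv M_0$ truly lands in $M_{2g\times g}(\mathbb{Z})$ when $\gamma\in\Gamma^1(N)$. Once the two stabilizer statements $\gamma^T M_0\equiv M_0\pmod{\mathbb{Z}}$ and $\beta^T M_0=M_0$ are observed, both invariance and rationality drop out immediately.
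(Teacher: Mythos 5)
Your proof is correct and takes essentially the same route as the paper's: invariance under $\Gamma^1(N)$ via (S3) plus the congruence $\gamma^T M_0\equiv M_0\Mod{M_{2g\times g}(\mathbb{Z})}$ (which triggers (S2)), and rationality of the Fourier coefficients from $\beta^T M_0=M_0$ for all $\beta\in G_N$ together with Remark \ref{decomp}(i). The only blemish is notational: for $\gamma$ with blocks $A,B,C,D$ the product $\gamma^T M_0$ equals $(1/N)$ times the column with blocks $A^T$ and $B^T$ (not ``$A'$, $C'$''), but since $A\equiv I_g$ and $B\equiv O_g\Mod{N}$ for $\gamma\in\Gamma^1(N)$, your congruence and conclusion are exactly right.
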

\begin{proof}
For any $\gamma=\left[\begin{matrix}A&B\\C&D\end{matrix}\right]\in\Gamma^1(N)$
we deduce by (S2) and (S3) that
\begin{eqnarray*}
h_{\left[\begin{smallmatrix}(1/N)I_g\\O_g\end{smallmatrix}\right]}(\gamma(Z))
=h_{\left[\begin{smallmatrix}(1/N)I_g\\O_g\end{smallmatrix}\right]}(Z)^{\widetilde{\gamma}}=
h_{\gamma^T\left[\begin{smallmatrix}(1/N)I_g\\O_g\end{smallmatrix}\right]}(Z)
=h_{(1/N)\left[\begin{smallmatrix}A^T\\B^T\end{smallmatrix}\right]}(Z)
=h_{\left[\begin{smallmatrix}(1/N)I_g\\O_g\end{smallmatrix}\right]}(Z)
\end{eqnarray*}
because $A\equiv I_g,~B\equiv O_g\Mod{N\cdot M_g(\mathbb{Z})}$.
Thus $h_{\left[\begin{smallmatrix}(1/N)I_g\\O_g\end{smallmatrix}\right]}(Z)$ is modular
for $\Gamma^1(N)$.
\par
For every $\nu\in(\mathbb{Z}/N\mathbb{Z})^\times$
we see by (S2) and (S3) that
\begin{equation*}
h_{\left[\begin{smallmatrix}(1/N)I_g\\O_g\end{smallmatrix}\right]}(Z)^{\left[\begin{smallmatrix}
I_g&O_g\\O_g&\nu I_g\end{smallmatrix}\right]}
=
h_{\left[\begin{smallmatrix}
I_g&O_g\\O_g&\nu I_g\end{smallmatrix}\right]\left[\begin{smallmatrix}(1/N)I_g\\O_g\end{smallmatrix}\right]}(Z)
=h_{\left[\begin{smallmatrix}(1/N)I_g\\O_g\end{smallmatrix}\right]}(Z),
\end{equation*}
which implies that $h_{\left[\begin{smallmatrix}(1/N)I_g\\O_g\end{smallmatrix}\right]}(Z)$ has
rational Fourier coefficients. This proves the lemma.
\end{proof}

One can consider $\mathcal{S}_N$ as a field under the binary operations
\begin{eqnarray*}
\{h_M(Z)\}_M+\{k_M(Z)\}_M&=&
\{(h_M+k_M)(Z)\}_M,\\
\{h_M(Z)\}_M\cdot\{k_M(Z)\}_M&=&
\{(h_Mk_M)(Z)\}_M.
\end{eqnarray*}
By Lemma \ref{particular} we get the ring homomorphism
\begin{eqnarray*}
\phi_N~:~\mathcal{S}_N&\rightarrow&\mathcal{F}^1_N(\mathbb{Q})\\
\{h_M(Z)\}_M&\mapsto&h_{\left[\begin{smallmatrix}(1/N)I_g\\O_g\end{smallmatrix}\right]}(Z).
\end{eqnarray*}

\begin{lemma}\label{Sp}
If $M\in\mathcal{V}_N$, then there is $\gamma=\left[
\begin{matrix}A&B\\C&D\end{matrix}\right]\in M_{2g}(\mathbb{Z})$ such that
$\widetilde{\gamma}\in\mathrm{Sp}_{2g}(\mathbb{Z}/N\mathbb{Z})$ and
$M=(1/N)\left[\begin{matrix}A^T\\B^T\end{matrix}\right]$.
\end{lemma}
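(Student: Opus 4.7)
The plan is to begin with any witness $\alpha=\left[\begin{matrix}A&B\\C_0&D_0\end{matrix}\right]\in M_{2g}(\mathbb{Z})$ of $M\in\mathcal{V}_N$, so that $\widetilde{\alpha}\in\mathrm{GSp}_{2g}(\mathbb{Z}/N\mathbb{Z})$ and $M=(1/N)\left[\begin{matrix}A^T\\B^T\end{matrix}\right]$, and then to produce the desired $\gamma$ by altering \emph{only} the bottom block rows of $\alpha$. Because $M$ depends solely on $A$ and $B$, any such alteration automatically preserves $M$, so the task reduces to rescaling $\widetilde{\alpha}$ in such a way that its multiplier becomes $1$ modulo $N$.

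To achieve this, I would left-multiply $\alpha$ by a diagonal block matrix of the form $\mathrm{diag}(I_g,sI_g)$, which touches only the bottom $g$ rows. Concretely, let $t\in\mathbb{Z}$ be any lift of $\nu(\widetilde{\alpha})\in(\mathbb{Z}/N\mathbb{Z})^\times$ and choose $s\in\mathbb{Z}$ with $st\equiv 1\pmod{N}$; then set
\[
\gamma=\left[\begin{matrix}I_g&O_g\\O_g&sI_g\end{matrix}\right]\alpha=\left[\begin{matrix}A&B\\sC_0&sD_0\end{matrix}\right]\in M_{2g}(\mathbb{Z}).
\]
A short block computation using $\eta_g=\left[\begin{matrix}O_g&-I_g\\I_g&O_g\end{matrix}\right]$ yields $\left[\begin{matrix}I_g&O_g\\O_g&sI_g\end{matrix}\right]^T\eta_g\left[\begin{matrix}I_g&O_g\\O_g&sI_g\end{matrix}\right]=s\eta_g$, so this left factor has multiplier $s$. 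Multiplicativity of $\nu$ then gives $\nu(\widetilde{\gamma})\equiv s\cdot\nu(\widetilde{\alpha})\equiv st\equiv 1\pmod{N}$, that is, $\widetilde{\gamma}\in\mathrm{Sp}_{2g}(\mathbb{Z}/N\mathbb{Z})$. Since the top block rows of $\gamma$ are still $A$ and $B$, the identity $M=(1/N)\left[\begin{matrix}A^T\\B^T\end{matrix}\right]$ is again realized, now by $\gamma$, which is the required matrix.

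I do not anticipate a real obstacle here: the only clever point is spotting the left-multiplication trick by $\mathrm{diag}(I_g,sI_g)$, whose multiplier happens to equal $s$ rather than $1$. Everything else is a short, essentially formal verification of the multiplicative properties of $\nu$ together with the fact that the multiplier of any matrix in $\mathrm{GSp}_{2g}(\mathbb{Z}/N\mathbb{Z})$ lies in $(\mathbb{Z}/N\mathbb{Z})^\times$, both of which follow immediately from the definitions at the start of Section~2.
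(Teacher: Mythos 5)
Your argument is correct, and its engine is the same as the paper's: left-multiplying the witness by $\left[\begin{smallmatrix}I_g&O_g\\O_g&sI_g\end{smallmatrix}\right]$ with $s\equiv\nu(\widetilde{\alpha})^{-1}\Mod{N}$ is precisely the decomposition used in the paper, which writes $\widetilde{\alpha}=\left[\begin{smallmatrix}I_g&O_g\\O_g&\nu I_g\end{smallmatrix}\right]\left[\begin{smallmatrix}A&B\\ \nu^{-1}U&\nu^{-1}V\end{smallmatrix}\right]$ in $M_{2g}(\mathbb{Z}/N\mathbb{Z})$ and notes that the second factor lies in $\mathrm{Sp}_{2g}(\mathbb{Z}/N\mathbb{Z})$. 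Where you diverge is the endgame: the paper concludes by invoking the surjectivity of the reduction $\mathrm{Sp}_{2g}(\mathbb{Z})\rightarrow\mathrm{Sp}_{2g}(\mathbb{Z}/N\mathbb{Z})$ (citing Rapinchuk) to produce a preimage, whereas you simply take the integer matrix $\left[\begin{smallmatrix}A&B\\ sC_0&sD_0\end{smallmatrix}\right]$ itself. Your route is the more economical one for the lemma as stated, since it only demands $\gamma\in M_{2g}(\mathbb{Z})$ with $\widetilde{\gamma}\in\mathrm{Sp}_{2g}(\mathbb{Z}/N\mathbb{Z})$, not an honest symplectic lift; so no lifting theorem is needed at this point (the surjectivity is, however, genuinely used elsewhere in the paper, e.g.\ for $\delta$ in the proof of Theorem \ref{phiNisomorphism} and for $\gamma$ in the proof of Theorem \ref{main}). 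A further small advantage of your construction is that it keeps the top blocks equal to $A,B$ on the nose, so $M=(1/N)\left[\begin{smallmatrix}A^T\\B^T\end{smallmatrix}\right]$ holds exactly; a lift obtained from surjectivity has top blocks only congruent to $A,B$ modulo $N$, and one must then overwrite them (harmless, but left implicit in the paper). Your verification is complete: $\nu\left(\left[\begin{smallmatrix}I_g&O_g\\O_g&sI_g\end{smallmatrix}\right]\right)=s$, $\nu$ is multiplicative, and $s\,\nu(\widetilde{\alpha})\equiv st\equiv1\Mod{N}$, with $s$ existing because $\nu(\widetilde{\alpha})\in(\mathbb{Z}/N\mathbb{Z})^\times$ by the definition of $\mathrm{GSp}_{2g}$.
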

\begin{proof}
Let $\alpha=\left[\begin{matrix}A&B\\U&V\end{matrix}\right]\in M_{2g}(\mathbb{Z})$
such that $\widetilde{\alpha}\in\mathrm{GSp}_{2g}(\mathbb{Z}/N\mathbb{Z})$ and
$M=(1/N)\left[\begin{matrix}A^T\\B^T\end{matrix}\right]$.
In $M_{2g}(\mathbb{Z}/N\mathbb{Z})$, decompose $\widetilde{\alpha}$
as
\begin{equation*}
\widetilde{\alpha}=
\left[\begin{matrix}I_g&O_g\\O_g&\nu I_g\end{matrix}\right]
\left[\begin{matrix}A&B\\\nu^{-1}U&\nu^{-1}V\end{matrix}\right]
\quad\textrm{with}~\nu=\nu(\widetilde{\alpha})\in(\mathbb{Z}/N\mathbb{Z})^\times
\end{equation*}
so that $\left[\begin{matrix}A&B\\\nu^{-1}U&\nu^{-1}V\end{matrix}\right]$ belongs
to $\mathrm{Sp}_{2g}(\mathbb{Z}/N\mathbb{Z})$. Since
the reduction $\mathrm{Sp}_{2g}(\mathbb{Z})\rightarrow
\mathrm{Sp}_{2g}(\mathbb{Z}/N\mathbb{Z})$ is surjective (\cite{Rapinchuk}),
we can take $\gamma\in M_{2g}(\mathbb{Z})$ satisfying
$\widetilde{\gamma}=\left[\begin{matrix}A&B\\\nu^{-1}U&\nu^{-1}V\end{matrix}\right]$.
\end{proof}

\begin{theorem}\label{phiNisomorphism}
$\mathcal{S}_N$ and $\mathcal{F}^1_N(\mathbb{Q})$ are isomorphic
via $\phi_N$.
\end{theorem}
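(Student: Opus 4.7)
The plan is to prove injectivity using the transitivity of the $\mathrm{GSp}$-action on Siegel families, and to prove surjectivity by constructing an explicit inverse. For injectivity, if $\{h_M(Z)\}_M\in\ker(\phi_N)$ then $h_{\left[\begin{smallmatrix}(1/N)I_g\\O_g\end{smallmatrix}\right]}(Z)=0$; by the transitivity observation in the remark following Definition \ref{Siegelfamily}, every $h_M(Z)$ equals $h_{\left[\begin{smallmatrix}(1/N)I_g\\O_g\end{smallmatrix}\right]}(Z)^{\widetilde{\alpha}}$ for some $\widetilde{\alpha}\in\mathrm{GSp}_{2g}(\mathbb{Z}/N\mathbb{Z})/\{\pm I_{2g}\}$, and since Galois automorphisms fix $0$, all $h_M(Z)$ vanish, so the kernel is trivial.

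For surjectivity, given $f\in\mathcal{F}^1_N(\mathbb{Q})$ and $M\in\mathcal{V}_N$, I would define $h_M(Z):=f(\gamma(Z))$, where $\gamma\in\mathrm{Sp}_{2g}(\mathbb{Z})$ is produced by combining Lemma \ref{Sp} with the surjectivity of $\mathrm{Sp}_{2g}(\mathbb{Z})\to\mathrm{Sp}_{2g}(\mathbb{Z}/N\mathbb{Z})$ so that $M\equiv(1/N)\gamma^T\left[\begin{smallmatrix}I_g\\O_g\end{smallmatrix}\right]\Mod{M_{2g\times g}(\mathbb{Z})}$. The crucial well-definedness step is to check that if $\gamma_1,\gamma_2\in\mathrm{Sp}_{2g}(\mathbb{Z})$ both represent $\pm M$, then $\delta:=\gamma_2\gamma_1^{-1}$ satisfies $\delta^T\left[\begin{smallmatrix}I_g\\O_g\end{smallmatrix}\right]\equiv\pm\left[\begin{smallmatrix}I_g\\O_g\end{smallmatrix}\right]\Mod{N}$; combining this with the symplectic relations modulo $N$ forces $\delta\in\pm\Gamma^1(N)$, and the $\Gamma^1(N)$-modularity of $f$ then gives $f(\gamma_2(Z))=f(\gamma_1(Z))$.

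With $\{h_M(Z)\}_M$ so defined, (S1) and (S2) are immediate, and $\phi_N(\{h_M(Z)\}_M)=f$. The most delicate requirement is (S3). For $\sigma\in\mathrm{GSp}_{2g}(\mathbb{Z}/N\mathbb{Z})/\{\pm I_{2g}\}$, writing $h_M=f^{\widetilde{\gamma}}$, I would decompose $\widetilde{\gamma}\sigma=\beta\tau$ with $\beta\in G_N$ and $\tau\in\mathrm{Sp}_{2g}(\mathbb{Z}/N\mathbb{Z})/\{\pm I_{2g}\}$ via Remark \ref{decomp}. Since $f$ has rational Fourier coefficients, $f^\beta=f$, so $h_M^\sigma=f^{\widetilde{\gamma}\sigma}=f^\tau=f(\gamma'(Z))$ for any lift $\gamma'\in\mathrm{Sp}_{2g}(\mathbb{Z})$ of $\tau$. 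Using $\beta^T\left[\begin{smallmatrix}I_g\\O_g\end{smallmatrix}\right]=\left[\begin{smallmatrix}I_g\\O_g\end{smallmatrix}\right]$, a direct computation gives $(1/N){\gamma'}^T\left[\begin{smallmatrix}I_g\\O_g\end{smallmatrix}\right]\equiv\sigma^T M\Mod{M_{2g\times g}(\mathbb{Z})}$, whence $h_M^\sigma=h_{\sigma^T M}$.

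The main technical obstacle is the well-definedness check in the surjectivity step, which amounts to a stabilizer computation inside $\mathrm{GSp}_{2g}(\mathbb{Z}/N\mathbb{Z})$ together with the identification of the image of $\Gamma^1(N)$ modulo $N$; once that is secured, the verification of (S3) reduces to essentially clerical bookkeeping built on the decomposition $\mathrm{GSp}=G_N\cdot\mathrm{Sp}$ and the rationality of the Fourier coefficients of $f$.
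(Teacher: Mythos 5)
Your proposal is correct and takes essentially the same route as the paper: you define $h_M=f^{\widetilde{\gamma}}$ via Lemma \ref{Sp} together with the surjectivity of $\mathrm{Sp}_{2g}(\mathbb{Z})\rightarrow\mathrm{Sp}_{2g}(\mathbb{Z}/N\mathbb{Z})$, your stabilizer computation $\delta\in\pm\Gamma^1(N)$ is exactly the paper's well-definedness step $\widetilde{\gamma'}\widetilde{\gamma}^{-1}=\left[\begin{smallmatrix}I_g&O_g\\ \mathrm{*}&I_g\end{smallmatrix}\right]$, and your verification of (S3) via the decomposition $\widetilde{\gamma}\sigma=\beta\tau$ with $\beta\in G_N$ absorbed by the rationality of the Fourier coefficients is precisely the paper's explicit block-matrix computation written abstractly. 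The only cosmetic difference is injectivity, which you establish by a direct kernel argument using transitivity, while the paper simply observes that both sides are fields, so it suffices to prove $\phi_N$ surjective.
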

\begin{proof}
Since $\mathcal{S}_N$ and $\mathcal{F}^1_N(\mathbb{Q})$
are fields, it suffices to show that $\phi_N$ is surjective.
\par
Let $h(Z)\in\mathcal{F}^1_N(\mathbb{Q})$. For each
$M\in\mathcal{V}_N$, take any $\gamma
=\left[\begin{matrix}A&B\\C&D\end{matrix}\right]\in M_{2g}(\mathbb{Z})$ such that
$\widetilde{\gamma}\in\mathrm{Sp}_{2g}(\mathbb{Z}/N\mathbb{Z})$ and
$M=(1/N)\left[\begin{matrix}A^T\\B^T\end{matrix}\right]$ by using Lemma \ref{Sp}. And, set
\begin{equation*}
h_M(Z)=h(Z)^{\widetilde{\gamma}}.
\end{equation*}
We claim that $h_M(Z)$ is independent of the choice of $\gamma$. Indeed, if
$\gamma'=\left[\begin{matrix}A&B\\C'&D'\end{matrix}\right]\in M_{2g}(\mathbb{Z})$ such that
$\widetilde{\gamma'}\in\mathrm{Sp}_{2g}(\mathbb{Z}/N\mathbb{Z})$, then we attain in $M_{2g}(\mathbb{Z}/N\mathbb{Z})$ that
\begin{equation*}
\widetilde{\gamma'}\widetilde{\gamma}^{-1}=\left[\begin{matrix}
A&B\\C'&D'\end{matrix}\right]
\left[\begin{matrix}
D^T&-B^T\\-C^T&A^T\end{matrix}\right]
=\left[\begin{matrix}
I_g&O_g\\\mathrm{*}&I_g\end{matrix}\right]
\end{equation*}
by the fact $\widetilde{\gamma},
\widetilde{\gamma'}
\in\mathrm{Sp}_{2g}(\mathbb{Z}/N\mathbb{Z})$.
Let $\delta$ be an element of $\mathrm{Sp}_{2g}(\mathbb{Z})$
such that $\widetilde{\delta}=\widetilde{\gamma'}\widetilde{\gamma}^{-1}$.
We then achieve 
\begin{equation*}
h(Z)^{\widetilde{\gamma'}}=(h(Z)^{\widetilde{\gamma'}\widetilde{\gamma}^{-1}})^{\widetilde{\gamma}}
=h(\delta(Z))^{\widetilde{\gamma}}=
h(Z)^{\widetilde{\gamma}}
\end{equation*}
because $h(Z)$ is modular for $\Gamma^1(N)$ and $\delta\in\Gamma^1(N)$.
\par
Now, for any $\sigma=\left[\begin{matrix}P&Q\\R&S\end{matrix}\right]
\in\mathrm{GSp}_{2g}(\mathbb{Z}/N\mathbb{Z})/\{\pm I_{2g}\}$ with
$\nu=\nu(\sigma)$
we derive that
\begin{eqnarray*}
h_M(Z)^\sigma&=&h(Z)^{\widetilde{\gamma}\sigma}\\
&=&h(Z)^{\left[\begin{smallmatrix}
A&B\\C&D
\end{smallmatrix}\right]\left[\begin{smallmatrix}
P&Q\\R&S
\end{smallmatrix}\right]}\\
&=&h(Z)^{\left[\begin{smallmatrix}
I_g&O_g\\O_g&\nu I_g
\end{smallmatrix}\right]
\left[\begin{smallmatrix}
AP+BR&AQ+BS\\\nu^{-1}(CP+DR)&\nu^{-1}(CQ+DS)
\end{smallmatrix}\right]}\\
&=&h(Z)^{\left[\begin{smallmatrix}
AP+BR&AQ+BS\\\nu^{-1}(CP+DR)&\nu^{-1}(CQ+DS)
\end{smallmatrix}\right]}\quad
\textrm{since $h(Z)$ has rational Fourier coefficients}\\
&=&h_{\left[\begin{smallmatrix}
(AP+BR)^T\\
(AQ+BS)^T
\end{smallmatrix}\right]}(Z)\\
&=&h_{\left[\begin{smallmatrix}P^T&R^T\\Q^T&S^T\end{smallmatrix}\right]
\left[\begin{smallmatrix}A^T\\B^T\end{smallmatrix}\right]}(Z)\\
&=&h_{\sigma^T M}(Z).
\end{eqnarray*}
This shows that the family $\{h_M(Z)\}_M$ belongs to $\mathcal{S}_N$. Furthermore, since
\begin{equation*}
\phi_N(\{h_M(Z)\}_M)=h_{\left[\begin{smallmatrix}
(1/N)I_g\\O_g
\end{smallmatrix}\right]}(Z)
=h(Z)^{\left[\begin{smallmatrix}
I_g&O_g\\O_g&I_g
\end{smallmatrix}\right]}=h(Z),
\end{equation*}
$\phi$ is surjective as desired.
\end{proof}

\begin{remark}
\begin{itemize}
\item[(i)] By Proposition \ref{functionfields} and Remark \ref{decomp} we obtain
\begin{equation*}
\mathrm{Gal}(\mathcal{F}_N/\mathcal{F}^1_N(\mathbb{Q}))
\simeq G_N\cdot\left\{\gamma\in\mathrm{Sp}_{2g}(\mathbb{Z}/N\mathbb{Z})/\{\pm I_{2g}\}~|~
\gamma=\pm\left[\begin{matrix}I_g& O_g\\\mathrm{*}&I_g\end{matrix}\right]
\right\}.
\end{equation*}
\item[(ii)]
Let $\mathcal{F}_{1,N}(\mathbb{Q})$ be the field of meromorphic Siegel modular functions
for
\begin{equation*}
\Gamma_1(N)=\left\{\gamma\in\mathrm{Sp}_{2g}(\mathbb{Z})~|~
\gamma\equiv\left[\begin{matrix}I_g&\mathrm{*}\\O_g&I_g\end{matrix}\right]
\Mod{N\cdot M_{2g}(\mathbb{Z})}\right\}
\end{equation*}
with rational Fourier coefficients. If we set
\begin{equation*}
\omega=\left[\begin{matrix}(1/\sqrt{N})I_g&O_g\\O_g&\sqrt{N}I_g\end{matrix}\right],
\end{equation*}
then we know that $\omega\in\mathrm{Sp}_{2g}(\mathbb{R})$ and
\begin{equation*}
\omega\left[\begin{matrix}A&B\\C&D\end{matrix}\right]\omega^{-1}
=\left[\begin{matrix}A&(1/N)B\\NC&D\end{matrix}\right]
\quad
\textrm{for all}~\left[\begin{matrix}A&B\\C&D\end{matrix}\right]\in
\mathrm{Sp}_{2g}(\mathbb{R}).
\end{equation*}
This implies 
\begin{equation*}
\omega\Gamma^1(N)\omega^{-1}=\Gamma_1(N),
\end{equation*}
and so $\mathcal{F}_{1,N}(\mathbb{Q})$ and $\mathcal{F}^1_N(\mathbb{Q})$ are isomorphic via
\begin{eqnarray*}
\mathcal{F}_{1,N}(\mathbb{Q})&\rightarrow&\mathcal{F}^1_N(\mathbb{Q})\\
h(Z)&\mapsto&(h\circ\omega)(Z)=h((1/N)Z).
\end{eqnarray*}
\end{itemize}
\end{remark}

\section {Special values associated with a Siegel family}

As an application of a Siegel family of level $N$
we shall construct a number associated with each ray class modulo $N$
of a CM-field.
\par
Let
$n$ be a positive integer,
$K$ be a CM-field with $[K:\mathbb{Q}]=2n$ and
$\{\varphi_1,\ldots,\varphi_n\}$ be a set of embeddings of $K$ into $\mathbb{C}$ such that $(K,\{\varphi_i\}_{i=1}^n)$ is a CM-type.
We fix a finite Galois extension $L$ of $\mathbb{Q}$ containing $K$, and set
\begin{eqnarray*}
S&=&\{\sigma\in\mathrm{Gal}(L/\mathbb{Q})~|~\sigma|_K=\varphi_i~\textrm{for some $i\in\{1,2,\ldots, n\}$}\},\\
S^*&=&\{\sigma^{-1}~|~\sigma\in S\},\\
H^*&=&\{\gamma\in\mathrm{Gal}(L/\mathbb{Q})~|~\gamma S^*=S^*\}.
\end{eqnarray*}
Let $K^*$ be the subfield of $L$ corresponding to the subgroup $H^*$
of $\mathrm{Gal}(L/\mathbb{Q})$, and let $\{\psi_1,\ldots,\psi_g\}$ be the set of all embeddings of $K^*$ into $\mathbb{C}$ arising from the elements of $S^*$.
Then we know that $(K^*,\{\psi_j\}_{j=1}^g)$ is a primitive CM-type and
\begin{equation*}
K^*=\mathbb{Q}\left(\sum_{i=1}^n a^{\varphi_i}~|~a\in K\right)
\end{equation*}
(\cite[Proposition 28 in $\S$8.3]{Shimura98}).
We call this CM-type $(K^*,\{\psi_j\}_{j=1}^g)$ the reflex of $(K,\{\varphi_i\}_{i=1}^n)$.
Using this CM-type we define an embedding
\begin{eqnarray*}
\Psi:K^*&\rightarrow&\mathbb{C}^g\\
a&\mapsto&\left[\begin{matrix}a^{\psi_1}\\\vdots\\
a^{\psi_g}\end{matrix}\right].
\end{eqnarray*}
For each purely imaginary element $c$ of $K^*$ we associate an $\mathbb{R}$-bilinear form
\begin{equation*}
\begin{array}{cccl}
E_c:&\mathbb{C}^g\times\mathbb{C}^g&\rightarrow&\mathbb{R}\\
&(\mathbf{u},\mathbf{v})&\mapsto&\displaystyle\sum_{j=1}^g c^{\psi_j}(u_j\overline{v}_j-
\overline{u}_jv_j)\quad(\mathbf{u}=\left[\begin{matrix}u_1\\\vdots\\u_g\end{matrix}\right],
\mathbf{v}=\left[\begin{matrix}v_1\\\vdots\\v_g\end{matrix}\right]).
\end{array}
\end{equation*}
Then, one can readily check that
\begin{equation}\label{Tr}
E_c(\Psi(a),\Psi(b))=\mathrm{Tr}_{K^*/\mathbb{Q}}(ca\overline{b})
\quad\textrm{for all}~a,b\in K^*
\end{equation}
by utilizing the fact $\overline{a^{\psi_j}}=\overline{a}^{\psi_j}$ for all $a\in K^*$
($1\leq j\leq g$).

\begin{assumption}\label{assumption}
In what follows we assume the following conditions:
\begin{itemize}
\item[(i)] $(K^*)^*=K$.
\item[(ii)] There is a purely imaginary element $\xi$ of $K^*$
and a $\mathbb{Z}$-basis $\{\mathbf{a}_1,\ldots,\mathbf{a}_{2g}\}$
of the lattice $\Psi(\mathcal{O}_{K^*})$ in $\mathbb{C}^g$ for which
\begin{equation*}
\left[\begin{matrix}E_\xi(\mathbf{a}_i,\mathbf{a}_j)\end{matrix}\right]_{1\leq i,j\leq 2g}
=\left[\begin{matrix}O_g&-I_g\\I_g&O_g\end{matrix}\right].
\end{equation*}
In this case, we say that the complex torus
$(\mathbb{C}^g/\Psi(\mathcal{O}_{K^*}),E_\xi)$ is a principally polarized abelian variety
with a symplectic basis $\{\mathbf{a}_1,\ldots,\mathbf{a}_{2g}\}$.
See \cite[$\S$6.2]{Shimura98}.
\item[(iii)] $\mathfrak{f}=N\mathcal{O}_K$ for an integer $N\geq2$.
\end{itemize}
\end{assumption}

\begin{remark}
The Assumption \ref{assumption} (i) is equivalent to saying that $(K,\{\varphi_i\}_{i=1}^n)$ is a primitive CM-type, namely, the abelian varieties of this CM-type are simple (\cite[Proposition 26 in $\S$8.2]{Shimura98}).
\end{remark}

By Assumption \ref{assumption} (i) one can define a group homomorphism
\begin{equation*}
\begin{array}{cccc}
\mathfrak{g}:&K^\times&\rightarrow& (K^*)^\times\\
&d&\mapsto&\displaystyle\prod_{i=1}^n d^{\varphi_i},
\end{array}
\end{equation*}
and extend it continuously to the homomorphism
$\mathfrak{g}: K^\times_\mathbb{A}\rightarrow (K^*)_\mathbb{A}^\times$
of idele groups.
It is also known that for each fractional ideal $\mathfrak{a}$ of $K$
there is a fractional ideal $\mathcal{G}(\mathfrak{a})$ of $K^*$ such that
\begin{equation*}
\mathcal{G}(\mathfrak{a})\mathcal{O}_L
=\prod_{i=1}^n(\mathfrak{a}\mathcal{O}_L)^{\varphi_i}
\end{equation*}
(\cite[$\S$8.3]{Shimura98}).
Let $\mathcal{C}$ be a given ray class in $\mathrm{Cl}(\mathfrak{f})$. Take any integral
ideal $\mathfrak{c}$ in $\mathcal{C}$, and let
\begin{equation*}
\mathcal{N}(\mathfrak{c})=\mathcal{N}_{K/\mathbb{Q}}(\mathfrak{c})=|\mathcal{O}_K/\mathfrak{c}|.
\end{equation*}

\begin{lemma}\label{also}
$(\mathbb{C}^g/\Psi(\mathcal{G}(\mathfrak{c})^{-1}),E_{\xi \mathcal{N}(\mathfrak{c})})$
is also a principally polarized abelian variety.
\end{lemma}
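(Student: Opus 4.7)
The plan is to verify the two defining properties of a principally polarized abelian variety for $(\mathbb{C}^g/\Psi(\mathcal{G}(\mathfrak{c})^{-1}),E_{\xi\mathcal{N}(\mathfrak{c})})$: namely, that $E_{\xi\mathcal{N}(\mathfrak{c})}$ takes integer values on the lattice and has Gram matrix of determinant $1$ (equivalently, admits a symplectic basis), and that the associated Hermitian form on $\mathbb{C}^g$ is positive definite. The key input will be the ideal identity
\begin{equation*}
\mathcal{G}(\mathfrak{c})\,\overline{\mathcal{G}(\mathfrak{c})}=\mathcal{N}(\mathfrak{c})\,\mathcal{O}_{K^*},
\end{equation*}
which replaces the scalar $\mathcal{N}(\mathfrak{c})$ by the ideal product $\mathcal{G}(\mathfrak{c})\overline{\mathcal{G}(\mathfrak{c})}$.

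First I would establish this identity. Because complex conjugation pairs $\{\varphi_i\}_{i=1}^n$ with $\{\overline{\varphi_i}\}_{i=1}^n$ to exhaust the embeddings of $K$ into $L$, the defining formula for $\mathcal{G}$ yields
\begin{equation*}
\mathcal{G}(\mathfrak{c})\,\overline{\mathcal{G}(\mathfrak{c})}\,\mathcal{O}_L=\prod_{\sigma:K\hookrightarrow L}(\mathfrak{c}\mathcal{O}_L)^\sigma=N_{K/\mathbb{Q}}(\mathfrak{c})\,\mathcal{O}_L=\mathcal{N}(\mathfrak{c})\,\mathcal{O}_L,
\end{equation*}
and intersecting with $K^*$ gives the identity in $\mathcal{O}_{K^*}$. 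Taking norms over $K^*/\mathbb{Q}$ and noting that complex conjugation preserves $N_{K^*/\mathbb{Q}}$ on ideals of $K^*$, I also obtain $N_{K^*/\mathbb{Q}}(\mathcal{G}(\mathfrak{c}))=\mathcal{N}(\mathfrak{c})^g$, which will be the bridge to the discriminant computation below.

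With this in hand, integrality follows immediately: for $a,b\in\mathcal{G}(\mathfrak{c})^{-1}$, the identity gives $\mathcal{N}(\mathfrak{c})a\overline{b}\in\mathcal{N}(\mathfrak{c})\mathcal{G}(\mathfrak{c})^{-1}\overline{\mathcal{G}(\mathfrak{c})^{-1}}=\mathcal{O}_{K^*}$, so by (\ref{Tr}),
\begin{equation*}
E_{\xi\mathcal{N}(\mathfrak{c})}(\Psi(a),\Psi(b))=\mathrm{Tr}_{K^*/\mathbb{Q}}\!\bigl(\xi\cdot\mathcal{N}(\mathfrak{c})a\overline{b}\bigr)\in\mathrm{Tr}_{K^*/\mathbb{Q}}(\xi\mathcal{O}_{K^*})\subset\mathbb{Z},
\end{equation*}
where the last inclusion uses Assumption \ref{assumption}(ii) specialized to $b=1$. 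For principality I would then compute the Gram determinant. Fixing any $\mathbb{Z}$-basis $\{\mathbf{b}_1,\dots,\mathbf{b}_{2g}\}$ of $\Psi(\mathcal{G}(\mathfrak{c})^{-1})$ and letting $M\in\mathrm{GL}_{2g}(\mathbb{Q})$ express it in terms of the symplectic basis $\{\mathbf{a}_i\}$ of $\Psi(\mathcal{O}_{K^*})$, a standard covolume-versus-ideal-norm comparison yields $|\det M|=N_{K^*/\mathbb{Q}}(\mathcal{G}(\mathfrak{c}))^{-1}=\mathcal{N}(\mathfrak{c})^{-g}$. Since $E_{\xi\mathcal{N}(\mathfrak{c})}=\mathcal{N}(\mathfrak{c})\cdot E_\xi$ as $\mathbb{R}$-bilinear forms on $\mathbb{C}^g$, the new Gram matrix is $\mathcal{N}(\mathfrak{c})\,M^T\eta_g M$, of determinant $\mathcal{N}(\mathfrak{c})^{2g}(\det M)^2=1$. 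As an integral skew-symmetric matrix of determinant $1$ has all elementary divisors equal to $1$, a symplectic basis exists; and positivity of the Hermitian form attached to $E_{\xi\mathcal{N}(\mathfrak{c})}$ is inherited from that of $E_\xi$ (supplied by Assumption \ref{assumption}(ii)) because the two forms differ by the positive scalar $\mathcal{N}(\mathfrak{c})$.

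The main obstacle is purely bookkeeping: correctly tracking the covolume of $\Psi(\mathcal{G}(\mathfrak{c})^{-1})$ relative to $\Psi(\mathcal{O}_{K^*})$ in terms of ideal norms, and verifying that $N_{K^*/\mathbb{Q}}(\mathcal{G}(\mathfrak{c}))=\mathcal{N}(\mathfrak{c})^g$ matches the scaling factors $\mathcal{N}(\mathfrak{c})^{2g}$ and $(\det M)^{-2}$ precisely enough that the discriminant collapses to exactly $1$ rather than merely a nontrivial square.
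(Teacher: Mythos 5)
Your proof is correct and takes essentially the same route as the paper: both rest on the reflex-norm identity $\mathcal{G}(\mathfrak{c})\overline{\mathcal{G}(\mathfrak{c})}=\mathcal{N}(\mathfrak{c})\mathcal{O}_{K^*}$ for integrality of the form, the norm computation $\mathcal{N}_{K^*/\mathbb{Q}}(\mathcal{G}(\mathfrak{c}))=\mathcal{N}(\mathfrak{c})^{g}$, and a determinant count that forces all elementary divisors $\varepsilon_1,\ldots,\varepsilon_g$ to equal $1$. The differences are only presentational: you prove the norm identity explicitly (the paper uses it tacitly in the trace and norm computations) and compute the Gram determinant via covolumes before invoking the Frobenius normal form, whereas the paper chooses a Frobenius basis first and compares discriminants of fractional ideals via Lang's Proposition 13.
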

\begin{proof}
It follows from (\ref{Tr}) that
\begin{eqnarray*}
E_{\xi \mathcal{N}(\mathfrak{c})}(\Psi(\mathcal{G}(\mathfrak{c})^{-1}),
\Psi(\mathcal{G}(\mathfrak{c})^{-1}))&=&
\mathrm{Tr}_{K^*/\mathbb{Q}}(\xi \mathcal{N}(\mathfrak{c})\mathcal{G}(\mathfrak{c})^{-1}
\overline{\mathcal{G}(\mathfrak{c})^{-1}})\\
&=&\mathrm{Tr}_{K^*/\mathbb{Q}}(\xi\mathcal{O}_{K^*})\\
&=&E_\xi(\Psi(\mathcal{O}_{K^*}),\Psi(\mathcal{O}_{K^*}))\\
&\subseteq&\mathbb{Z}
\end{eqnarray*}
because $E_\xi$ is a Riemann form
on $\mathbb{C}^{g\phantom{\big|}\hspace{-0.1cm}}/\Psi(\mathcal{O}_{K^*})$.
Thus $E_{\xi \mathcal{N}(\mathfrak{c})}$ defines a Riemann form on $\mathbb{C}^{g\phantom{\big|}\hspace{-0.1cm}}/\Psi(\mathcal{G}(\mathfrak{c})^{-1})$.
\par
Now, let $\{\mathbf{b}_1,\ldots,\mathbf{b}_{2g}\}$ be a symplectic basis of the abelian variety $(\mathbb{C}^{g\phantom{\big|}\hspace{-0.1cm}}/\Psi(\mathcal{G}(\mathfrak{c})^{-1}),E_{\xi \mathcal{N}(\mathfrak{c})})$
so that
\begin{equation*}
\Psi(\mathcal{G}(\mathfrak{c})^{-1})=
\sum_{j=1}^{2g}\mathbb{Z}\mathbf{b}_j\quad
\textrm{and}\quad
\left[\begin{matrix}E_{\xi \mathcal{N}(\mathfrak{c})}(\mathbf{b}_i,
\mathbf{b}_j)
\end{matrix}\right]_{1\leq i,j\leq 2g}=\left[\begin{matrix}O_g&-\mathcal{E}\\
\mathcal{E}& O_g\end{matrix}\right],
\end{equation*}
where $\mathcal{E}=\left[\begin{matrix}\varepsilon_1 & \cdots & 0\\
\vdots & \ddots & \vdots\\
0 & \cdots & \varepsilon_g\end{matrix}\right]$
is a $g\times g$ diagonal matrix
for some
positive integers $\varepsilon_1,\ldots,\varepsilon_g$ satisfying $\varepsilon_1\,|\,\cdots\,|\,\varepsilon_g$.
Furthermore, let $b_1\ldots,b_{2g}$ be elements of $\mathcal{G}(\mathfrak{c})^{-1}$ such that $\mathbf{b}_j=\Psi(b_j)$ ($1\leq j\leq 2g$).
Since $\mathcal{O}_{K^*}\subseteq\mathcal{G}(\mathfrak{c})^{-1}$, we have
\begin{equation}\label{abaa}
\left[\begin{matrix}
\mathbf{a}_1 & \cdots & \mathbf{a}_{2g}
\end{matrix}\right]
=\left[\begin{matrix}
\mathbf{b}_1 & \cdots & \mathbf{b}_{2g}
\end{matrix}\right]\alpha\quad\textrm{for some}~\alpha\in M_{2g}(\mathbb{Z})\cap\mathrm{GL}_{2g}(\mathbb{Q}),
\end{equation}
and hence
\begin{equation*}
\left[\begin{matrix}
a_1^{\psi_1} & \cdots & a_{2g}^{\psi_1}\\
\vdots &  & \vdots\\
a_1^{\psi_g} & \cdots & a_{2g}^{\psi_g}\\
\overline{a_1^{\psi_1}} & \cdots & \overline{a_{2g}^{\psi_1}}\\
\vdots &  & \vdots\\
\overline{a_1^{\psi_g}} & \cdots & \overline{a_{2g}^{\psi_g}}
\end{matrix}\right]=
\left[\begin{matrix}
b_1^{\psi_1} & \cdots & b_{2g}^{\psi_1}\\
\vdots & & \vdots\\
b_1^{\psi_g} & \cdots & b_{2g}^{\psi_g}\\
\overline{b_1^{\psi_1}} & \cdots & \overline{b_{2g}^{\psi_1}}\\
\vdots & & \vdots\\
\overline{b_1^{\psi_g}} & \cdots & \overline{b_{2g}^{\psi_g}}
\end{matrix}\right]\alpha.
\end{equation*}
Taking determinant and squaring gives rise to the identity
\begin{equation*}
\Delta_{K^*/\mathbb{Q}}(a_1,\ldots,a_{2g})=\Delta_{K^*/\mathbb{Q}}(b_1,\ldots,b_{2g})\det(\alpha)^2.
\end{equation*}
It then follows that
\begin{equation}\label{detam}
\begin{array}{ccl}\displaystyle
\det(\alpha)^2=
\frac{|\Delta_{K^*/\mathbb{Q}}(a_1,\ldots,a_{2g})|}{|\Delta_{K^*/\mathbb{Q}}(b_1,\ldots,b_{2g})|}=
\frac{d_{K^*/\mathbb{Q}}(\mathcal{O}_{K^*})}
{d_{K^*/\mathbb{Q}}(\mathcal{G}(\mathfrak{c})^{-1})}
&=&\mathcal{N}_{K^*/\mathbb{Q}}(\mathcal{G}(\mathfrak{c}))^2\\
&=&\mathcal{N}_{K^*/\mathbb{Q}}(\mathcal{G}(\mathfrak{c})\overline{\mathcal{G}(\mathfrak{c})})\vspace{0.1cm}\\
&=&\mathcal{N}(\mathfrak{c})^{2g},
\end{array}
\end{equation}
where $d_{K^*/\mathbb{Q}}$ stands for the discriminant of a fractional ideal of $K^*$ (\cite[Proposition 13 in Chapter III]{Lang}).
And, we deduce by (\ref{abaa}) that
\begin{eqnarray*}
\mathcal{N}(\mathfrak{c})\left[\begin{matrix}O_g & -I_g\\I_g&O_g\end{matrix}\right]&=&
\left[\mathcal{N}(\mathfrak{c})\begin{matrix}E_\xi(\mathbf{a}_i,\mathbf{a}_j)\end{matrix}\right]_{1\leq i,j\leq 2g}\\
&=&
\left[\begin{matrix}E_{\xi \mathcal{N}(\mathfrak{c})}(\mathbf{a}_i,\mathbf{a}_j)\end{matrix}\right]_{1\leq i,j\leq 2g}\\
&=&\alpha^T\left[\begin{matrix}E_{\xi \mathcal{N}(\mathfrak{c})}(\mathbf{b}_i,\mathbf{b}_j)\end{matrix}\right]_{1\leq i,j\leq 2g}\alpha\\
&=&\alpha^T\left[\begin{matrix}O_g&-\mathcal{E}\\
\mathcal{E}&O_g\end{matrix}\right]\alpha.
\end{eqnarray*}
By taking determinant we get
\begin{equation*}
\mathcal{N}(\mathfrak{c})^{2g}=\det(\alpha)^2(\varepsilon_1\cdots\varepsilon_g)^2,
\end{equation*}
which yields by (\ref{detam}) that $\varepsilon_1=\cdots=\varepsilon_g=1$, and so $\mathcal{E}=I_g$.
Therefore, $(\mathbb{C}^{g\phantom{\big|}\hspace{-0.1cm}}/\Psi(\mathcal{G}(\mathfrak{c})^{-1}),
E_{\xi \mathcal{N}(\mathfrak{c})})$ becomes a principally polarized abelian variety.
\end{proof}

As in the proof of Lemma \ref{also} we take a symplectic basis $\{\mathbf{b}_1,\ldots,\mathbf{b}_{2g}\}$
of the principally polarized abelian variety
$(\mathbb{C}^g/\Psi(\mathcal{G}(\mathfrak{c})^{-1}),E_{\xi \mathcal{N}(\mathfrak{c})})$, and
let $b_1,\ldots,b_{2g}$ be elements of $\mathcal{G}(\mathfrak{c})^{-1}$
such that $\mathbf{b}_j=\Psi(b_j)$ ($1\leq j\leq 2g$).
We then have
\begin{equation}\label{aba}
\left[\begin{matrix}\mathbf{a}_1 & \cdots & \mathbf{a}_{2g}\end{matrix}\right]=
\left[\begin{matrix}\mathbf{b}_1 & \cdots & \mathbf{b}_{2g}\end{matrix}\right]\alpha
\quad\textrm{for some}~\alpha=\left[\begin{matrix}
A&B\\C&D\end{matrix}\right]\in M_{2g}(\mathbb{Z})\cap\mathrm{GSp}_{2g}(\mathbb{Q}).
\end{equation}
Since $\nu(\alpha)=\mathcal{N}(\mathfrak{c})$ is relatively prime to $N$,
the reduction $\widetilde{\alpha}$ of $\alpha$ modulo $N$ belongs to
$\mathrm{GSp}_{2g}(\mathbb{Z}/N\mathbb{Z})$.
Let $Z_\mathfrak{c}^*$ be the CM-point
associated with the symplectic basis $\{\mathbf{b}_1,\ldots,\mathbf{b}_{2g}\}$, namely
\begin{equation*}
Z_\mathfrak{c}^*=\left[\begin{matrix}\mathbf{b}_{g+1} & \cdots & \mathbf{b}_{2g}\end{matrix}\right]^{-1}
\left[\begin{matrix}\mathbf{b}_1 & \cdots & \mathbf{b}_g\end{matrix}\right]
\end{equation*}
which belongs to $\mathbb{H}_g$ (\cite[Proposition 8.1.1]{B-L}).

\begin{definition}\label{invariant}
Let $\{h_M(Z)\}_M\in\mathcal{S}_N$.
For a given ray class $\mathcal{C}\in\mathrm{Cl}(\mathfrak{f})$ we define
\begin{equation*}
h_\mathfrak{f}(\mathcal{C})=h_{(1/N)\left[\begin{smallmatrix}B\\D\end{smallmatrix}\right]}(Z_\mathfrak{c}^*).
\end{equation*}
\end{definition}

\begin{remark}
Here, the index matrix $(1/N)\left[\begin{matrix}B\\D\end{matrix}\right]$
is obtained by the fact 
\begin{equation*}
(\left[\begin{matrix}O_g&-I_g\\I_g&O_g\end{matrix}\right]\alpha)^T
=\left[\begin{matrix}B^T &D^T\\-A^T&-C^T\end{matrix}\right].
\end{equation*}
\end{remark}

\section {Well-definedness of $h_\mathfrak{f}(\mathcal{C})$}

In this section we shall show that the value $h_\mathfrak{f}(\mathcal{C})$ given in Definition \ref{invariant} depends only on the ray class $\mathcal{C}$, and hence it is independent of
the choice of a symplectic basis and an integral ideal in $\mathcal{C}$.

\begin{proposition}\label{indep1}
$h_\mathfrak{f}(\mathcal{C})$ does not depend on the choice of
a symplectic basis $\{\mathbf{b}_1,\ldots,\mathbf{b}_{2g}\}$ of
$(\mathbb{C}^g/\Psi(\mathcal{G}(\mathfrak{c})^{-1}), E_{\xi \mathcal{N}(\mathfrak{c})})$.
\end{proposition}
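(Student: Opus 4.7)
The plan is to show that passing to a second symplectic basis produces exactly compensating changes in the index matrix $M = (1/N)\left[\begin{smallmatrix}B\\D\end{smallmatrix}\right]$ and in the CM-point $Z_\mathfrak{c}^*$, and then to use property (S3) together with Remark 2.4 to absorb the change.

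First I would observe that any two symplectic bases of $(\mathbb{C}^g/\Psi(\mathcal{G}(\mathfrak{c})^{-1}),E_{\xi\mathcal{N}(\mathfrak{c})})$ are related by an element of $\mathrm{Sp}_{2g}(\mathbb{Z})$: if $\{\mathbf{b}_j'\}$ is another symplectic basis and $\left[\mathbf{b}_1'\cdots\mathbf{b}_{2g}'\right]=\left[\mathbf{b}_1\cdots\mathbf{b}_{2g}\right]\gamma$, then computing both Gram matrices of $E_{\xi\mathcal{N}(\mathfrak{c})}$ in the two bases forces $\gamma^T\eta_g\gamma=\eta_g$, so $\gamma\in\mathrm{Sp}_{2g}(\mathbb{Z})$.

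Next I would track how the data in Definition \ref{invariant} change. From $[\mathbf{a}]=[\mathbf{b}]\alpha=[\mathbf{b}']\alpha'$ I obtain $\alpha'=\gamma^{-1}\alpha$, and hence
\begin{equation*}
M' \;=\; (1/N)\begin{bmatrix}B'\\D'\end{bmatrix} \;=\; \gamma^{-1}M.
\end{equation*}
Writing $\gamma=\left[\begin{smallmatrix}P&Q\\R&S\end{smallmatrix}\right]$ and using $[\mathbf{b}_1\cdots\mathbf{b}_g]=[\mathbf{b}_{g+1}\cdots\mathbf{b}_{2g}]Z_\mathfrak{c}^*$, a direct computation gives
\begin{equation*}
Z_\mathfrak{c}^{*\prime} \;=\; (Z_\mathfrak{c}^*Q+S)^{-1}(Z_\mathfrak{c}^*P+R).
\end{equation*}
Taking transpose and using the symmetry of $Z_\mathfrak{c}^{*\prime}$ rewrites this as $(P^TZ_\mathfrak{c}^*+R^T)(Q^TZ_\mathfrak{c}^*+S^T)^{-1}$, which is precisely the standard action $\gamma^T(Z_\mathfrak{c}^*)$ of $\gamma^T\in\mathrm{Sp}_{2g}(\mathbb{Z})$ on $\mathbb{H}_g$.

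Finally I would combine the two effects. Since $h_{M'}\in\mathcal{F}_N$ and $\gamma^T\in\mathrm{Sp}_{2g}(\mathbb{Z})$, Remark \ref{decomp}(ii) together with (S3) yields
\begin{equation*}
h_{M'}\!\left(\gamma^T(Z_\mathfrak{c}^*)\right)
= h_{M'}^{\widetilde{\gamma^T}}(Z_\mathfrak{c}^*)
= h_{(\widetilde{\gamma^T})^T M'}(Z_\mathfrak{c}^*)
= h_{\widetilde{\gamma} M'}(Z_\mathfrak{c}^*),
\end{equation*}
and because $\widetilde{\gamma}M'=\widetilde{\gamma}\,\widetilde{\gamma^{-1}}\,M=M$, we conclude that $h_{M'}(Z_\mathfrak{c}^{*\prime})=h_M(Z_\mathfrak{c}^*)$, which is the desired independence. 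The main obstacle is merely bookkeeping: correctly identifying the change-of-basis matrix with the element of $\mathrm{Sp}_{2g}(\mathbb{Z})$ that acts on $Z_\mathfrak{c}^*$ (which turns out to be $\gamma^T$, not $\gamma$, due to the transpose appearing in the symplectic transformation law for period matrices), and verifying that $M'=\gamma^{-1}M$ indeed lies in $\mathcal{V}_N$, since $\widetilde{\alpha'}=\widetilde{\gamma^{-1}}\widetilde{\alpha}\in\mathrm{GSp}_{2g}(\mathbb{Z}/N\mathbb{Z})$ with the same multiplier $\mathcal{N}(\mathfrak{c})$ as $\widetilde{\alpha}$ modulo $N$.
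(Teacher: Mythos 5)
Your proposal is correct and takes essentially the same route as the paper's own proof: you identify the change-of-basis matrix as an element $\gamma\in\mathrm{Sp}_{2g}(\mathbb{Z})$ via the Gram matrix of $E_{\xi\mathcal{N}(\mathfrak{c})}$, compute that the index matrix becomes $\gamma^{-1}M$ while the CM-point transforms as $Z_\mathfrak{c}^*\mapsto\gamma^T(Z_\mathfrak{c}^*)$ (including the transpose subtlety obtained from the symmetry of the period matrix), and then cancel the two effects using (S3) and Remark \ref{decomp}(ii), exactly as in the paper. Your explicit verification that $M'=\gamma^{-1}M$ lies in $\mathcal{V}_N$ is a minor point the paper leaves implicit, but otherwise the two arguments coincide step for step.
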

\begin{proof}
Let $\{\widehat{\mathbf{b}}_1,\ldots,\widehat{\mathbf{b}}_{2g}\}$ be another
symplectic basis of $(\mathbb{C}^g/\Psi(\mathcal{G}(\mathfrak{c})^{-1}), E_{\xi \mathcal{N}(\mathfrak{c})})$, and so
\begin{equation}\label{beta}
\left[\begin{matrix}\widehat{\mathbf{b}}_1 & \cdots & \widehat{\mathbf{b}}_{2g}\end{matrix}\right]=
\left[\begin{matrix}
\mathbf{b}_1 & \cdots & \mathbf{b}_{2g}
\end{matrix}\right]\beta\quad\textrm{for some}~\beta=\left[\begin{matrix}P&Q\\
R&S\end{matrix}\right]\in\mathrm{GL}_{2g}(\mathbb{Z}).
\end{equation}
We then derive that
\begin{equation*}
\left[\begin{matrix}O_g&-I_g\\I_g&O_g\end{matrix}\right]
=\left[\begin{matrix}
E_{\xi \mathcal{N}(\mathfrak{c})}(\widehat{\mathbf{b}}_i,\widehat{\mathbf{b}}_j)
\end{matrix}\right]_{1\leq i,j\leq 2g}
=\beta^T\left[\begin{matrix}
E_{\xi \mathcal{N}(\mathfrak{c})}(\mathbf{b}_i,\mathbf{b}_j)
\end{matrix}\right]_{1\leq i,j\leq 2g}\beta
=\beta^T\left[\begin{matrix}O_g&-I_g\\I_g&O_g\end{matrix}\right]\beta,
\end{equation*}
which shows that $\beta\in\mathrm{Sp}_{2g}(\mathbb{Z})$.
Since
\begin{equation*}
\left[\begin{matrix}\mathbf{a}_1 & \cdots & \mathbf{a}_{2g}\end{matrix}\right]=
\left[\begin{matrix}\mathbf{b}_1 & \cdots & \mathbf{b}_{2g}\end{matrix}\right]\alpha
=\left[\begin{matrix}\widehat{\mathbf{b}}_1 & \cdots & \widehat{\mathbf{b}}_{2g}\end{matrix}\right]\beta^{-1}\alpha
\end{equation*}
by (\ref{aba}) and (\ref{beta}),
the special value
obtained by $\{\widehat{\mathbf{b}}_1,\ldots,\widehat{\mathbf{b}}_{2g}\}$
is
\begin{equation*}
h_{(1/N)\beta^{-1}\left[\begin{smallmatrix}B\\D\end{smallmatrix}\right]}
(\widehat{Z}_\mathfrak{c}^*),
\end{equation*}
where $\widehat{Z}_\mathfrak{c}^*$ is the CM-point
corresponding to $\{\widehat{\mathbf{b}}_1,\ldots,\widehat{\mathbf{b}}_{2g}\}$.
\par
On the other hand, we attain that
\begin{eqnarray}
\widehat{Z}_\mathfrak{c}^*&=&
\left[\begin{matrix}
\widehat{\mathbf{b}}_{g+1} & \cdots & \widehat{\mathbf{b}}_{2g}\end{matrix}\right]^{-1}
\left[\begin{matrix}
\widehat{\mathbf{b}}_1 & \cdots & \widehat{\mathbf{b}}_g\end{matrix}\right]\nonumber\\
&=&\left(\left[\begin{matrix}\mathbf{b}_1 & \cdots &\mathbf{b}_g\end{matrix}\right]Q+
\left[\begin{matrix}\mathbf{b}_{g+1} & \cdots &\mathbf{b}_{2g}\end{matrix}\right]S\right)^{-1}\nonumber\\
&&
\left(\left[\begin{matrix}\mathbf{b}_1 & \cdots &\mathbf{b}_g\end{matrix}\right]P+
\left[\begin{matrix}\mathbf{b}_{g+1} & \cdots &\mathbf{b}_{2g}\end{matrix}\right]R\right)\quad\textrm{by (\ref{beta})}\nonumber\\
&=&
\left(P^T\left[\begin{matrix}\mathbf{b}_1 & \cdots &\mathbf{b}_g\end{matrix}\right]^T+
R^T\left[\begin{matrix}\mathbf{b}_{g+1} & \cdots &\mathbf{b}_{2g}\end{matrix}\right]^T\right)\nonumber\\
&&\left(Q^T\left[\begin{matrix}\mathbf{b}_1 & \cdots &\mathbf{b}_g\end{matrix}\right]^T+
S^T\left[\begin{matrix}\mathbf{b}_{g+1} & \cdots &\mathbf{b}_{2g}\end{matrix}\right]^T\right)^{-1}\quad\textrm{since}~(\widehat{Z}_\mathfrak{c}^*)^T=
\widehat{Z}_\mathfrak{c}^*\nonumber\\
&=&\left(P^T\left(\left[\begin{matrix}\mathbf{b}_{g+1} & \cdots &\mathbf{b}_{2g}\end{matrix}\right]^{-1}
\left[\begin{matrix}\mathbf{b}_1 & \cdots &\mathbf{b}_g\end{matrix}\right]\right)^T+
R^T\right)\nonumber\\
&&\left(Q^T\left(\left[\begin{matrix}\mathbf{b}_{g+1} & \cdots &\mathbf{b}_{2g}\end{matrix}\right]^{-1}
\left[\begin{matrix}\mathbf{b}_1 & \cdots &\mathbf{b}_g\end{matrix}\right]\right)^T+S^T\right)^{-1}\nonumber\\
&=&(P^T(Z_\mathfrak{c}^*)^T+R^T)(Q^T(Z_\mathfrak{c}^*)^T+S^T)^{-1}\nonumber\\
&=&(P^TZ_\mathfrak{c}^*+R^T)(Q^TZ_\mathfrak{c}^*+S^T)^{-1}\quad\textrm{because}~(Z_\mathfrak{c}^*)^T=Z_\mathfrak{c}^*\nonumber\\
&=&\beta^T(Z_\mathfrak{c}^*).\label{Ztilde}
\end{eqnarray}
Thus we deduce that
\begin{eqnarray*}
h_{(1/N)\beta^{-1}\left[\begin{smallmatrix}B\\D\end{smallmatrix}\right]}(\widehat{Z}_\mathfrak{c}^*)
&=&h_{(1/N)\beta^{-1}\left[\begin{smallmatrix}B\\D\end{smallmatrix}\right]}(\beta^T(Z_\mathfrak{c}^*))
\quad\textrm{by (\ref{Ztilde})}\\
&=&(h_{(1/N)\beta^{-1}\left[\begin{smallmatrix}B\\D\end{smallmatrix}\right]}(Z))^{\beta^T}|_{Z=Z_\mathfrak{c}^*}\\
&=&h_{(1/N)(\beta^T)^T\beta^{-1}\left[\begin{smallmatrix}B\\D\end{smallmatrix}\right]}(Z_\mathfrak{c}^*)
\quad\textrm{by the property (S3) of $\{h_M(Z)\}_M$}\\
&=&h_{(1/N)\left[\begin{smallmatrix}B\\D\end{smallmatrix}\right]}(Z_\mathfrak{c}^*).
\end{eqnarray*}
This proves that $h_\mathfrak{f}(\mathcal{C})$ is independent of the choice of a symplectic basis
of $(\mathbb{C}^g/\Psi(\mathcal{G}(\mathfrak{c})^{-1}), E_{\xi \mathcal{N}(\mathfrak{c})})$.
\end{proof}

\begin{remark}
In like manner one can readily show that
$h_\mathfrak{f}(\mathcal{C})$ does not depend on the choice of
a symplectic basis $\{\mathbf{a}_1,\ldots,\mathbf{a}_{2g}\}$
of $(\mathbb{C}^g/\Psi(\mathcal{O}_K),E_\xi)$.
\end{remark}

\begin{proposition}\label{choiceofbasis}
$h_\mathfrak{f}(\mathcal{C})$ does not depend on the choice of an integral ideal $\mathfrak{c}$ in $\mathcal{C}$.
\end{proposition}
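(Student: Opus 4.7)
The plan is to pick any two integral ideals $\mathfrak{c},\mathfrak{c}'$ in $\mathcal{C}$, write $\mathfrak{c}'=d\mathfrak{c}$ with $d\in K^\times$ satisfying $d\equiv 1\pmod{\mathfrak{f}}$, and exhibit, starting from a symplectic basis $\{\mathbf{b}_j\}$ for $(\mathbb{C}^g/\Psi(\mathcal{G}(\mathfrak{c})^{-1}),E_{\xi\mathcal{N}(\mathfrak{c})})$, a symplectic basis $\{\mathbf{b}'_j\}$ for $(\mathbb{C}^g/\Psi(\mathcal{G}(\mathfrak{c}')^{-1}),E_{\xi\mathcal{N}(\mathfrak{c}')})$ for which Definition \ref{invariant} returns the same value. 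Proposition \ref{indep1} guarantees this suffices. A standard transitivity argument through an integral $\mathfrak{c}''\in\mathcal{C}$ contained simultaneously in $\mathfrak{c}$ and $\mathfrak{c}'$ (constructed via the Chinese remainder theorem in $\mathcal{O}_K$ to kill the denominators of $d$ away from $N$ while preserving the residue $1$ at primes dividing $N$) reduces the problem to the case $d\in\mathcal{O}_K$, i.e., $d\in 1+N\mathcal{O}_K$, which I assume henceforth.

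The transport goes through the multiplication operators $\delta(x):=\mathrm{diag}(x^{\psi_1},\ldots,x^{\psi_g})\in\mathrm{GL}_g(\mathbb{C})$ for $x\in(K^*)^\times$, which satisfy $\delta(x)\Psi(y)=\Psi(xy)$. Since $\mathcal{G}(\mathfrak{c}')^{-1}=\mathfrak{g}(d)^{-1}\mathcal{G}(\mathfrak{c})^{-1}$, the vectors $\mathbf{b}'_j:=\delta(\mathfrak{g}(d)^{-1})\mathbf{b}_j$ form a $\mathbb{Z}$-basis of $\Psi(\mathcal{G}(\mathfrak{c}')^{-1})$. The reflex identity $\mathfrak{g}(d)\overline{\mathfrak{g}(d)}=N_{K/\mathbb{Q}}(d)=\mathcal{N}(\mathfrak{c}')/\mathcal{N}(\mathfrak{c})$ (a positive rational, as $K$ is CM) combined with (\ref{Tr}) yields $E_{\xi\mathcal{N}(\mathfrak{c}')}(\delta(\mathfrak{g}(d)^{-1})\mathbf{u},\delta(\mathfrak{g}(d)^{-1})\mathbf{v})=E_{\xi\mathcal{N}(\mathfrak{c})}(\mathbf{u},\mathbf{v})$, so $\{\mathbf{b}'_j\}$ is symplectic for $E_{\xi\mathcal{N}(\mathfrak{c}')}$. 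Moreover, because $\delta(\mathfrak{g}(d)^{-1})$ is applied on the left, it factors out of both the numerator and the inverse in the defining formula for the CM-point and cancels, yielding $Z^*_{\mathfrak{c}'}=Z^*_\mathfrak{c}$.

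The crux of the proof is the reflex congruence $\mathfrak{g}(d)\in 1+N\mathcal{O}_{K^*}$: since $d\in 1+N\mathcal{O}_K$, each conjugate $d^{\varphi_i}\in 1+N\mathcal{O}_L$, so $\mathfrak{g}(d)=\prod_{i=1}^n d^{\varphi_i}\in(1+N\mathcal{O}_L)\cap K^*=1+N\mathcal{O}_{K^*}$. In particular $\mathfrak{g}(d)\in\mathcal{O}_{K^*}$, so multiplication by $\mathfrak{g}(d)$ preserves the lattice $\Psi(\mathcal{G}(\mathfrak{c})^{-1})$, giving a matrix identity $\delta(\mathfrak{g}(d))[\mathbf{b}_1\cdots\mathbf{b}_{2g}]=[\mathbf{b}_1\cdots\mathbf{b}_{2g}]\mu$ with $\mu\in M_{2g}(\mathbb{Z})$ and, by the congruence on $\mathfrak{g}(d)$, $\mu\equiv I_{2g}\pmod{N}$. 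Let $\alpha'\in M_{2g}(\mathbb{Z})$ be the analogue of $\alpha$ from (\ref{aba}) associated to $\{\mathbf{b}'_j\}$; then $[\mathbf{a}_1\cdots\mathbf{a}_{2g}]=[\mathbf{b}'_1\cdots\mathbf{b}'_{2g}]\alpha'=\delta(\mathfrak{g}(d)^{-1})[\mathbf{b}_1\cdots\mathbf{b}_{2g}]\alpha'$, which combined with $[\mathbf{a}_1\cdots\mathbf{a}_{2g}]=[\mathbf{b}_1\cdots\mathbf{b}_{2g}]\alpha$ yields $\alpha'=\mu\alpha\equiv\alpha\pmod{N}$. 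Writing $\alpha'=\bigl[\begin{smallmatrix}A'&B'\\ C'&D'\end{smallmatrix}\bigr]$, this gives $(1/N)\bigl[\begin{smallmatrix}B'\\ D'\end{smallmatrix}\bigr]\equiv(1/N)\bigl[\begin{smallmatrix}B\\ D\end{smallmatrix}\bigr]\pmod{M_{2g\times g}(\mathbb{Z})}$, so property (S2) of the Siegel family gives $h_{(1/N)[B';D']}=h_{(1/N)[B;D]}$; evaluating at the common CM-point $Z^*_\mathfrak{c}=Z^*_{\mathfrak{c}'}$ closes the argument.

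The main obstacle is the reflex step establishing $\mathfrak{g}(d)\in 1+N\mathcal{O}_{K^*}$, which is the mechanism translating the ray class condition on $d\in K$ into an integrality and congruence statement over the reflex field $K^*$; once this lattice-level fact is available, Proposition \ref{indep1} together with Siegel family axiom (S2) close everything mechanically.
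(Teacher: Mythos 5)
Your argument is correct and is essentially the paper's own proof: you transport the symplectic basis by multiplication by $\mathfrak{g}(d)^{-1}$ (the paper's diagonal matrix $T$), observe that the CM-point is unchanged, show the two transition matrices satisfy $\alpha'\equiv\alpha\Mod{N\cdot M_{2g}(\mathbb{Z})}$, and conclude via (S2) and Proposition \ref{indep1}. The only organizational difference is that the paper handles the possibly non-integral multiplier $1+a$ directly, deriving $a\in\mathfrak{f}\mathfrak{c}^{-1}$ and the relative congruence $\mathfrak{g}(1+a)\in 1+N\mathcal{G}(\mathfrak{c})^{-1}$, whereas you first reduce by a CRT/transitivity step to an integral multiplier $d\in 1+N\mathcal{O}_K$ so as to use the simpler congruence $\mathfrak{g}(d)\in 1+N\mathcal{O}_{K^*}$; both routes are sound.
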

\begin{proof}
Let $\mathfrak{c}'$ be another integral ideal in the class $\mathcal{C}$, and hence
\begin{equation}\label{c'c}
\mathfrak{c}'\mathfrak{c}^{-1}=(1+a)\mathcal{O}_K\quad\textrm{for some}~a\in
\mathfrak{f}\mathfrak{a}^{-1},
\end{equation}
where $\mathfrak{a}$ is an integral ideal of $K$ relatively prime to $\mathfrak{f}$.
Since $1\in\mathfrak{c}^{-1}$
and $(1+a)\in\mathfrak{c}'\mathfrak{c}^{-1}\subseteq\mathfrak{c}^{-1}$, we get
$a\in\mathfrak{c}^{-1}$.
Thus we derive that
\begin{eqnarray*}
a\mathfrak{a}\mathfrak{c}&\subseteq&\mathfrak{f}\mathfrak{c}\cap\mathfrak{a}
\quad\textrm{by the facts $a\in\mathfrak{f}\mathfrak{a}^{-1}$ and $a\in\mathfrak{c}^{-1}$}\\
&\subseteq&\mathfrak{f}\cap\mathfrak{a}\\
&=&\mathfrak{f}\mathfrak{a}\quad\textrm{because $\mathfrak{f}$ and $\mathfrak{a}$ are relatively prime},
\end{eqnarray*}
from which it follows that $a\in\mathfrak{f}\mathfrak{c}^{-1}$.
We then achieve by the fact $\mathfrak{f}=N\mathcal{O}_K$ that
\begin{equation}\label{g(1+a)}
\mathfrak{g}(1+a)
=\prod_{i=1}^n (1+a)^{\varphi_i}\in K^*\cap\prod_{i=1}^n (1+N(\mathfrak{c}^{-1}\mathcal{O}_L)^{\varphi_i})
\subseteq
K^*\cap(1+N\mathcal{G}(\mathfrak{c})^{-1}\mathcal{O}_L)
=1+N\mathcal{G}(\mathfrak{c})^{-1}.
\end{equation}
\par
Let
\begin{equation}\label{b'b'}
b_j'=\mathfrak{g}(1+a)^{-1}b_j\quad\textrm{and}\quad
\mathbf{b}_j'=\Psi(b_j')\quad(1\leq j\leq 2g).
\end{equation}
We know that $\{\mathbf{b}_1',\ldots,\mathbf{b}_{2g}'\}$
is a $\mathbb{Z}$-basis of the lattice $\Psi(\mathcal{G}(\mathfrak{c}')^{-1})$ in $\mathbb{C}^g$
and
\begin{equation}\label{bDb}
\mathbf{b}_j'=T\mathbf{b}_j\quad\textrm{with}~
T=\left[
\begin{matrix}
(\mathfrak{g}(1+a)^{-1})^{\psi_1} & \cdots & 0\\
\vdots & \ddots & \vdots\\
0 & \cdots & (\mathfrak{g}(1+a)^{-1})^{\psi_g}
\end{matrix}
\right].
\end{equation}
Furthermore, we get that
\begin{eqnarray*}
\left[\begin{matrix}
E_{\xi \mathcal{N}(\mathfrak{c}')}(\mathbf{b}_i',\mathbf{b}_j')
\end{matrix}\right]_{1\leq i,j\leq 2g}
&=&\left[\begin{matrix}
\mathrm{Tr}_{K^*/\mathbb{Q}}(\xi \mathcal{N}(\mathfrak{c}')b_i'\overline{b_j'})
\end{matrix}\right]_{1\leq i,j\leq 2g}\quad\textrm{by (\ref{Tr})}\\
&=&\left[\begin{matrix}
\mathrm{Tr}_{K^*/\mathbb{Q}}(\xi \mathcal{N}(\mathfrak{c}')
\mathfrak{g}(1+a)^{-1}b_i\overline{\mathfrak{g}(1+a)^{-1}b_j})
\end{matrix}\right]_{1\leq i,j\leq 2g}\quad\textrm{by (\ref{b'b'})}\\
&=&\left[\begin{matrix}
\mathrm{Tr}_{K^*/\mathbb{Q}}(\xi \mathcal{N}(\mathfrak{c}')\mathrm{N}_{K/\mathbb{Q}}(1+a)^{-1}
b_i\overline{b_j})
\end{matrix}\right]_{1\leq i,j\leq 2g}\\
&=&\left[\begin{matrix}
\mathrm{Tr}_{K/\mathbb{Q}}(\xi \mathcal{N}(\mathfrak{c}) b_i\overline{b_j})
\end{matrix}\right]_{1\leq i,j\leq 2g}\\
&&\textrm{by (\ref{c'c}) and the fact $\mathrm{N}_{K/\mathbb{Q}}(1+a)>0$}\\
&=&\left[\begin{matrix}E_{\xi \mathcal{N}(\mathfrak{c})}(\mathbf{b}_i,\mathbf{b}_j)
\end{matrix}\right]_{1\leq i,j\leq 2g}\quad\textrm{by (\ref{Tr})}\\
&=&\left[\begin{matrix}
O_g&-I_g\\
I_g&O_g\end{matrix}\right].
\end{eqnarray*}
Thus $\{\mathbf{b}_1',\ldots,\mathbf{b}_{2g}'\}$ is a symplectic basis of $(\mathbb{C}^{g\phantom{\big|}\hspace{-0.1cm}}/\Psi(\mathcal{G}(\mathfrak{c'})^{-1}),E_{\xi \mathcal{N}(\mathfrak{c}')})$, and its associated CM-point $Z_{\mathfrak{c}'}^*$ is given by
\begin{eqnarray}
Z_{\mathfrak{c}'}^*&=&\left[\begin{matrix}\mathbf{b}_{g+1}' & \cdots
&\mathbf{b}_{2g}'\end{matrix}\right]^{-1}
\left[\begin{matrix}\mathbf{b}_1' & \cdots
&\mathbf{b}_g'\end{matrix}\right]\nonumber\\
&=&\left[\begin{matrix}T\mathbf{b}_{g+1} & \cdots
&T\mathbf{b}_{2g}\end{matrix}\right]^{-1}
\left[\begin{matrix}T\mathbf{b}_1 & \cdots
&T\mathbf{b}_g\end{matrix}\right]\quad\textrm{by (\ref{bDb})}\nonumber\\
&=&Z_\mathfrak{c}^*.\label{Zc'}
\end{eqnarray}
\par
Let $\alpha=\left[\begin{matrix}a_{ij}\end{matrix}\right]$,
$\alpha'=\left[\begin{matrix}a_{ij}'\end{matrix}\right]\in M_{2g}(\mathbb{Z})$ such that
\begin{equation}\label{Nababa}
\left[\begin{matrix}\mathbf{a}_1 & \cdots & \mathbf{a}_{2g}\end{matrix}\right]
=\left[\begin{matrix}\mathbf{b}_1 & \cdots & \mathbf{b}_{2g}\end{matrix}\right]\alpha=
\left[\begin{matrix}\mathbf{b}_1' & \cdots & \mathbf{b}_{2g}'\end{matrix}\right]\alpha'.
\end{equation}
For each $1\leq i\leq 2g$ we obtain that
\begin{eqnarray*}
\sum_{j=1}^{2g}a_{ji}'b_j&=&\mathfrak{g}(1+a)\sum_{j=1}^{2g}a_{ji}'b_j'\quad\textrm{by (\ref{b'b'})}\\
&=&a_i\mathfrak{g}(1+a)\quad\textrm{by (\ref{Nababa})}\\
&\in&a_i(1+N\mathcal{G}(\mathfrak{c})^{-1})\quad\textrm{by (\ref{g(1+a)})}\\
&\subseteq&a_i+N\mathcal{G}(\mathfrak{c})^{-1}\quad\textrm{because}~ a_i\in\mathcal{O}_K\\
&=&\sum_{j=1}^{2g}a_{ji}b_j+N\sum_{j=1}^{2g}\mathbb{Z}b_j
\quad\textrm{by (\ref{Nababa})}.
\end{eqnarray*}
This yields $\alpha\equiv \alpha'\Mod{N\cdot M_{2g}(\mathbb{Z})}$, and hence
\begin{equation}\label{NaNa'}
(1/N)\alpha\equiv(1/N)\alpha'\Mod{M_{2g}(\mathbb{Z})}.
\end{equation}
Now, the result follows from (\ref{Zc'}), (\ref{NaNa'}) and the property (S2)
of $\{h_M(Z)\}_M$.
\end{proof}

\section {Galois actions on $h_\mathfrak{f}(\mathcal{C})$}

Finally we shall show that if $h_\mathfrak{f}(\mathcal{C})$ is finite, then it lies in the ray class field $K_\mathfrak{f}$ and satisfies the natural transformation formula under the Artin reciprocity map for $\mathfrak{f}$.
\par

Let $r:K^*\rightarrow M_{2g}(\mathbb{Q})$
be the regular representation with respect to the ordered basis
$\{a_1,\ldots,a_{2g}\}$ of $K^*$ over $\mathbb{Q}$ given by
\begin{equation}\label{hayay}
a\left[\begin{matrix}
a_1 \\ \vdots \\ a_{2g}
\end{matrix}\right]
=r(a)\left[\begin{matrix}
a_1 \\ \vdots \\ a_{2g}
\end{matrix}\right]\quad(a\in K^*).
\end{equation}
Then it can be extended to the map $r:(K^*)_\mathbb{A}\rightarrow M_{2g}(\mathbb{Q}_\mathbb{A})$ of adele rings.

\begin{lemma}[Shimura's Reciprocity Law]\label{reciprocity}
Let $f$ be an element of $\mathcal{F}$ which is finite at $Z_\mathfrak{c}^*$.
\begin{itemize}
\item[\textup{(i)}] The special value $f(Z_\mathfrak{c}^*)$ lies in $K_\mathrm{ab}$.
\item[\textup{(ii)}] For every $s\in K_\mathbb{A}^\times$ we have
$r(\mathfrak{g}(s))\in G_{\mathbb{A}+}$ and
\begin{equation*}
f(Z_\mathfrak{c}^*)^{[s,K]}=f^{\tau(r(\mathfrak{g}(s)^{-1}))}(Z_\mathfrak{c}^*).
\end{equation*}
\end{itemize}
\end{lemma}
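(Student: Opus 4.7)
The plan is to deduce the lemma from Shimura's main theorem of complex multiplication applied to the principally polarized abelian variety $A_\mathfrak{c}=(\mathbb{C}^g/\Psi(\mathcal{G}(\mathfrak{c})^{-1}),E_{\xi\mathcal{N}(\mathfrak{c})})$ produced in Lemma \ref{also}. This variety has CM by $K^*$ of type $(K^*,\{\psi_j\}_{j=1}^g)$, whose reflex is $(K,\{\varphi_i\}_{i=1}^n)$ by Assumption \ref{assumption}(i). Moreover $Z_\mathfrak{c}^*$ is the normalized period matrix of $A_\mathfrak{c}$ determined by the chosen symplectic basis $\{\mathbf{b}_1,\ldots,\mathbf{b}_{2g}\}$, so for any $f\in\mathcal{F}$ (lying in some $\mathcal{F}_N$) the value $f(Z_\mathfrak{c}^*)$ may be viewed as that of a Siegel modular function at a CM-point on the moduli space of principally polarized abelian varieties with level-$N$ structure. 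Part (i) then follows from the classical assertion that the moduli together with the $N$-torsion coordinates of a CM abelian variety generate an abelian extension of the ground field $K$.

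For part (ii), first verify $r(\mathfrak{g}(s))\in G_{\mathbb{A}+}$. Because $(K,\{\varphi_i\})$ is a CM-type, $\{\varphi_i\}\cup\{\bar\varphi_i\}$ is a complete set of embeddings of $K$, so $\mathfrak{g}(s)\,\overline{\mathfrak{g}(s)}=\mathrm{N}_{K/\mathbb{Q}}(s)\in\mathbb{Q}_\mathbb{A}^\times$. Substituting $a=\mathfrak{g}(s)a_i$ and $b=\mathfrak{g}(s)a_j$ into (\ref{Tr}) then gives
\begin{equation*}
r(\mathfrak{g}(s))^T\,\eta_g\,r(\mathfrak{g}(s))=\left[\mathrm{Tr}_{K^*/\mathbb{Q}}\bigl(\xi\,\mathfrak{g}(s)\overline{\mathfrak{g}(s)}\,a_i\bar a_j\bigr)\right]_{i,j}=\mathrm{N}_{K/\mathbb{Q}}(s)\,\eta_g,
\end{equation*}
with archimedean multiplier $\prod_i|s_\infty^{\varphi_i}|^2>0$. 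Next, invoke the adelic main theorem of complex multiplication: the automorphism $[s,K]$ carries $A_\mathfrak{c}$ isomorphically onto the abelian variety with period lattice $\Psi(\mathfrak{g}(s)^{-1}\mathcal{G}(\mathfrak{c})^{-1})$, the analytic lift of the isomorphism being multiplication by $\mathfrak{g}(s)^{-1}$ on $\mathbb{C}^g$. Expressed in the $\mathbb{Q}$-basis $\{a_1,\ldots,a_{2g}\}$ of $K^*$, this multiplication is exactly $r(\mathfrak{g}(s)^{-1})$ by (\ref{hayay}), so the transformation of the period matrix and of the level-$N$ structure of $A_\mathfrak{c}$ under $[s,K]$ is governed by $r(\mathfrak{g}(s)^{-1})\in G_{\mathbb{A}+}$.

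The main obstacle is to match this geometric action with the action $f\mapsto f^{\tau(r(\mathfrak{g}(s)^{-1}))}$ on $\mathcal{F}$. Using the factorization $G_{\mathbb{A}+}=U_N\Delta G_+$, Proposition \ref{Siegel-action} decomposes the $\tau$-action into three independent pieces: the $G_+$-component effects a symplectic change of the period matrix $Z_\mathfrak{c}^*$, the $\Delta$-component twists the Fourier coefficients by a power of $\zeta_N$ dictated by the multiplier $\nu(r(\mathfrak{g}(s)^{-1}))\equiv\mathrm{N}_{K/\mathbb{Q}}(s)^{-1}\Mod{N}$, and the $U_N$-component acts trivially on $\mathcal{F}_N$. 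Each of these must be shown to realize the corresponding effect of $[s,K]$ on the CM data: change of lattice, cyclotomic action on torsion (via the reciprocity $\zeta_N^{[s,K]}=\zeta_N^{\mathrm{N}_{K/\mathbb{Q}}(s)^{-1}}$), and preservation of the level-$N$ symplectic structure. Rather than reproving this triple matching from the main theorem of CM, I would cite Shimura's reciprocity law for Siegel modular functions as developed in \cite{Shimura98} and \cite{Shimura00}, and verify that, specialized to the present symplectic basis and regular representation, it yields precisely the asserted formula.
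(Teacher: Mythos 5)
Your proposal is correct and ultimately takes the same route as the paper: the paper's entire proof is the citation to \cite[Lemma 9.5 and Theorem 9.6]{Shimura00}, which is exactly where your argument also bottoms out after the expository reductions. Your added verifications are sound --- in particular the identity $\mathfrak{g}(s)\overline{\mathfrak{g}(s)}=\mathrm{N}_{K/\mathbb{Q}}(s)$ and the trace computation via (\ref{Tr}) do show $r(\mathfrak{g}(s))\in G_{\mathbb{A}+}$ (modulo the harmless point that (\ref{hayay}) most directly gives $r(\mathfrak{g}(s))\,\eta_g\,r(\mathfrak{g}(s))^T=\mathrm{N}_{K/\mathbb{Q}}(s)\,\eta_g$, which is equivalent to your transposed form since $\mathrm{GSp}_{2g}$ is stable under transposition with the same multiplier) --- so they constitute a helpful gloss on, rather than a departure from, the paper's proof.
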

\begin{proof}
See \cite[Lemma 9.5 and Theorem 9.6]{Shimura00}.
\end{proof}

\begin{theorem}\label{main}
If $h_\mathfrak{f}(\mathcal{C})$ is finite, then it belongs to $K_\mathfrak{f}$. 
And it satisfies \begin{equation*}
h_\mathfrak{f}(\mathcal{C})^{\sigma_\mathfrak{f}(\mathcal{D})}=
h_\mathfrak{f}(\mathcal{CD})\quad\textrm{for every}~\mathcal{D}\in\mathrm{Cl}(\mathfrak{f}),
\end{equation*}
where $\sigma_\mathfrak{f}$ is the Artin reciprocity map for $\mathfrak{f}$.
\end{theorem}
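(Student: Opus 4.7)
The strategy is to apply Shimura's reciprocity law (Lemma \ref{reciprocity}) to the function $h(Z)=h_{(1/N)\left[\begin{smallmatrix}B\\D\end{smallmatrix}\right]}(Z)\in\mathcal{F}_N$ at the CM-point $Z_\mathfrak{c}^*$. Part (i) already places $h_\mathfrak{f}(\mathcal{C})$ in $K_{\mathrm{ab}}$, so the whole theorem reduces to establishing the transformation formula: membership in $K_\mathfrak{f}$ will then follow by specialising $\mathcal{D}$ to the trivial ray class and observing that any $[s,K]$ acting trivially on $K_\mathfrak{f}$ fixes $h_\mathfrak{f}(\mathcal{C})$.

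For the formula, I would fix $\mathcal{D}\in\mathrm{Cl}(\mathfrak{f})$, pick an integral ideal $\mathfrak{d}\in\mathcal{D}$ coprime to $\mathfrak{f}$, and choose an idele $s\in K_\mathbb{A}^\times$ with $s_\infty=1$, with $s_p\equiv 1\Mod{\mathfrak{f}\mathcal{O}_{K,p}}$ for $p\mid\mathfrak{f}$, and with associated fractional ideal $\mathfrak{d}$, so that $\sigma_\mathfrak{f}(\mathcal{D})=[s,K]|_{K_\mathfrak{f}}$. Lemma \ref{reciprocity}(ii) then gives
\[
h_\mathfrak{f}(\mathcal{C})^{\sigma_\mathfrak{f}(\mathcal{D})}=h^{\tau(r(\mathfrak{g}(s)^{-1}))}(Z_\mathfrak{c}^*),
\]
and a direct calculation yields $\nu(r(\mathfrak{g}(s)^{-1}))=\mathfrak{g}(s)^{-1}\overline{\mathfrak{g}(s)^{-1}}=\mathrm{N}_{K/\mathbb{Q}}(s)^{-1}$, whose finite components represent $\mathcal{N}(\mathfrak{d})^{-1}\Mod{N}$. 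I would then decompose $r(\mathfrak{g}(s)^{-1})=u\cdot\delta\cdot\gamma$ via $G_{\mathbb{A}+}=U_N\Delta G_+$: by Proposition \ref{Siegel-action}, $u\in U_N$ acts trivially on $\mathcal{F}_N$, the factor $\delta=\left[\begin{smallmatrix}I_g&O_g\\O_g&tI_g\end{smallmatrix}\right]\in\Delta$ with $t\equiv\mathcal{N}(\mathfrak{d})^{-1}\Mod{N}$ conjugates Fourier coefficients cyclotomically, and $\gamma\in G_+$ acts by pre-composition. Using Proposition \ref{functionfields} and property (S3), the composite becomes $h_{M'}(\gamma(Z_\mathfrak{c}^*))$ for an explicit index matrix $M'$.

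The crux is identifying $(M',\gamma(Z_\mathfrak{c}^*))$ with the data of Definition \ref{invariant} for $\mathfrak{cd}$. Multiplication on $\mathbb{C}^g$ by $\mathfrak{g}(s)^{-1}$ carries $\Psi(\mathcal{G}(\mathfrak{c})^{-1})$ to $\Psi(\mathcal{G}(\mathfrak{cd})^{-1})$ and scales $E_\xi$ by $\mathcal{N}(\mathfrak{d})$, so I expect $\{\mathfrak{g}(s)^{-1}\mathbf{b}_j\}$ to form a symplectic basis of $(\mathbb{C}^g/\Psi(\mathcal{G}(\mathfrak{cd})^{-1}),E_{\xi\mathcal{N}(\mathfrak{cd})})$ whose associated CM-point is $\gamma(Z_\mathfrak{c}^*)$, by essentially the same linear-fractional computation as in (\ref{Ztilde}). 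The main obstacle will be the adelic bookkeeping modulo $N$: one must check that the transition matrix from $(\mathbf{a}_i)$ to this new symplectic basis is congruent mod $N$ to $\alpha\cdot u\delta\gamma$, so that (S3) delivers $M'=(1/N)\left[\begin{smallmatrix}B'\\D'\end{smallmatrix}\right]$ for the matrix $\alpha'=\left[\begin{smallmatrix}A'&B'\\C'&D'\end{smallmatrix}\right]$ dictated by $\mathfrak{cd}$. The two idelic constraints on $s$ are calibrated precisely for this: the congruence $s_p\equiv 1\Mod{\mathfrak{f}\mathcal{O}_{K,p}}$ forces the $U_N$-factor to act trivially on $\mathcal{F}_N$, while the $\Delta$-factor supplies the cyclotomic conjugation needed to absorb the $\mathcal{N}(\mathfrak{d})$-rescaling of the Riemann form when passing from $\mathcal{N}(\mathfrak{c})$ to $\mathcal{N}(\mathfrak{cd})$. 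Once this equality of index matrices is verified, $h_\mathfrak{f}(\mathcal{C})^{\sigma_\mathfrak{f}(\mathcal{D})}=h_\mathfrak{f}(\mathcal{CD})$ follows, with independence of $(\mathfrak{d},s)$ guaranteed by Propositions \ref{indep1} and \ref{choiceofbasis}.
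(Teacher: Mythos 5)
Your overall route---apply Lemma \ref{reciprocity} at $Z_\mathfrak{c}^*$ to an idele representing $\mathcal{D}$, split $r(\mathfrak{g}(s)^{-1})$ into a piece acting trivially on $\mathcal{F}_N$, a cyclotomic piece, and a $G_+$ piece, then match the result with the $\mathfrak{cd}$-data of Definition \ref{invariant}---is the paper's route. But your opening displayed equation is circular: $\sigma_\mathfrak{f}(\mathcal{D})$ is an automorphism of $K_\mathfrak{f}$, and applying it to $h_\mathfrak{f}(\mathcal{C})$ presupposes $h_\mathfrak{f}(\mathcal{C})\in K_\mathfrak{f}$, which is exactly the membership you postpone to the end. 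Lemma \ref{reciprocity} (ii) only computes $h_\mathfrak{f}(\mathcal{C})^{[s,K]}$, and a priori this depends on $s$, not merely on $[s,K]|_{K_\mathfrak{f}}$; your closing observation (``any $[s,K]$ acting trivially on $K_\mathfrak{f}$ fixes the value'') would require the transformation computation for \emph{every} idele in the subgroup of $K_\mathbb{A}^\times$ attached to $K_\mathfrak{f}$ by class field theory, not for one chosen $s$ per class. The paper avoids this by first fixing $M$ with $N\,|\,M$ and $h_\mathfrak{f}(\mathcal{C})\in K_\mathfrak{m}$, $\mathfrak{m}=M\mathcal{O}_K$, choosing $\mathfrak{d}$ prime to $\mathfrak{m}$ and $s_p=1$ for $p\,|\,M$, so that $[s,K]|_{K_\mathfrak{m}}=\sigma_\mathfrak{m}(\widetilde{\mathcal{D}})$ acts on the value in a well-defined way; it proves $h_\mathfrak{f}(\mathcal{C})^{\sigma_\mathfrak{m}(\widetilde{\mathcal{D}})}=h_\mathfrak{f}(\mathcal{CD})$ at level $\mathfrak{m}$ and only then deduces $h_\mathfrak{f}(\mathcal{C})\in K_\mathfrak{f}$ by specializing to $\mathfrak{d}=d\mathcal{O}_K$ with $d\equiv1\Mod{\mathfrak{f}}$, since such classes exhaust $\mathrm{Gal}(K_\mathfrak{m}/K_\mathfrak{f})$. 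You should adopt this auxiliary-modulus device (or, equivalently, rerun your argument for all admissible $s$ and say so explicitly).

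The second gap is the construction of the new symplectic basis: for an idele $s$, ``multiplication by $\mathfrak{g}(s)^{-1}$'' on $\mathbb{C}^g$ and the vectors ``$\mathfrak{g}(s)^{-1}\mathbf{b}_j$'' are not defined, and $\mathcal{G}(\mathfrak{cd})^{-1}$ is in general \emph{not} a $K^*$-multiple of $\mathcal{G}(\mathfrak{c})^{-1}$---that happens only when $\mathcal{G}(\mathfrak{d})$ is principal, which is precisely the special situation isolated later in Corollary \ref{Cor}. The paper instead takes an abstract symplectic basis $\{\mathbf{d}_1,\ldots,\mathbf{d}_{2g}\}$ of $(\mathbb{C}^g/\Psi(\mathcal{G}(\mathfrak{cd})^{-1}),E_{\xi\mathcal{N}(\mathfrak{cd})})$ (legitimate by the argument of Lemma \ref{also}), lets $\delta\in M_{2g}(\mathbb{Z})\cap G_+$ be the transition matrix (so $\nu(\delta)=\mathcal{N}(\mathfrak{d})$ and $Z_{\mathfrak{cd}}^*=(\delta^{-1})^T(Z_\mathfrak{c}^*)$), and then uses the local lattice identity $\mathfrak{g}(s)_p^{-1}(\mathcal{O}_{K^*})_p=\mathcal{G}(\mathfrak{d})_p^{-1}$ to produce $u\in\prod_p\mathrm{GL}_{2g}(\mathbb{Z}_p)$ with $r(\mathfrak{g}(s)^{-1})=u(\delta^{-1})^T$; the decomposition you invoke from $G_{\mathbb{A}+}=U_N\Delta G_+$ is realized concretely by lifting, via surjectivity of $\mathrm{Sp}_{2g}(\mathbb{Z})\rightarrow\mathrm{Sp}_{2g}(\mathbb{Z}/M\mathbb{Z})$, a $\gamma$ with $\delta^T\equiv\left[\begin{smallmatrix}I_g&O_g\\O_g&\mathcal{N}(\delta)I_g\end{smallmatrix}\right]\gamma\Mod{M\cdot M_{2g}(\mathbb{Z})}$, after which $u\gamma^{-1}$ is congruent to the diagonal element and (S2), (S3) carry the index matrix $(1/N)\left[\begin{smallmatrix}B\\D\end{smallmatrix}\right]$ to $(1/N)\delta\left[\begin{smallmatrix}B\\D\end{smallmatrix}\right]$, matching $\delta\alpha$ and hence $\mathfrak{cd}$. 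In your write-up, everything after ``I expect'' and ``one must check'' is the actual content of the theorem, and as literally set up it fails whenever $\mathcal{G}(\mathfrak{d})$ is not principal; the explicit $\delta$--$u$--$\gamma$ construction is the missing idea that makes the mod-$N$ bookkeeping go through.
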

\begin{proof}
Since $h_\mathfrak{f}(\mathcal{C})$ belongs to $K_\mathrm{ab}$ by Lemma \ref{reciprocity} (i), there is a sufficiently large positive integer $M$ so that $N\,|\,M$ and $h_\mathfrak{f}(\mathcal{C})\in K_\mathfrak{m}$ with $\mathfrak{m}=M\mathcal{O}_K$.
Take an integral ideal $\mathfrak{d}$ in $\mathcal{D}$
relatively prime to $\mathfrak{m}$ by using the surjectivity of the natural map
$\mathrm{Cl}(\mathfrak{m})\rightarrow\mathrm{Cl}(\mathfrak{f})$.
Let $\{\mathbf{d}_1,\ldots,\mathbf{d}_{2g}\}$
be a symplectic basis of the principally polarized abelian variety
$(\mathbb{C}^{g\phantom{\big|}\hspace{-0.1cm}}/\Psi(
\mathcal{G}(\mathfrak{c}\mathfrak{d})^{-1}),
E_{\xi \mathcal{N}(\mathfrak{cd})})$, and let
$d_1,\ldots,d_{2g}$
be elements of $\mathcal{G}(\mathfrak{c}\mathfrak{d})^{-1}$
such that $\mathbf{d}_j=\Psi(d_j)$
($1\leq j\leq 2g$).
Since $\mathcal{G}(\mathfrak{c})^{-1}\subseteq
\mathcal{G}(\mathfrak{c}\mathfrak{d})^{-1}$, we get
\begin{equation}\label{bdd}
\left[\begin{matrix}\mathbf{b}_1 & \cdots & \mathbf{b}_{2g}\end{matrix}\right]=
\left[\begin{matrix}\mathbf{d}_1 & \cdots & \mathbf{d}_{2g}\end{matrix}\right]\delta\quad\textrm{for some}~
\delta\in M_{2g}(\mathbb{Z})\cap\mathrm{GL}_{2g}(\mathbb{Q}).
\end{equation}
We then have that
\begin{eqnarray*}
\left[\begin{matrix}
O_g&-I_g\\I_g&O_g
\end{matrix}\right]&=&
\left[\begin{matrix}
E_{\xi \mathcal{N}(\mathfrak{c})}(\mathbf{b}_i,\mathbf{b}_j)
\end{matrix}\right]_{1\leq i,j\leq 2g}\\
&=&\delta^T\left[\begin{matrix}E_{\xi \mathcal{N}(\mathfrak{c})}(\mathbf{d}_i,\mathbf{d}_j)
\end{matrix}\right]_{1\leq i,j\leq 2g}\delta\quad\textrm{by (\ref{bdd})}\\
&=&\delta^T\left[\begin{matrix}\mathcal{N}(\mathfrak{c})\mathcal{N}(\mathfrak{cd})^{-1}E_{\xi \mathcal{N}(\mathfrak{cd})}(\mathbf{d}_i,\mathbf{d}_j)
\end{matrix}\right]_{1\leq i,j\leq 2g}\delta\\
&=&\mathcal{N}(\mathfrak{d})^{-1}\delta^T
\left[\begin{matrix}
O_g&-I_g\\I_g&O_g
\end{matrix}\right]\delta.
\end{eqnarray*}
This claims that
\begin{equation}\label{deltain}
\delta\in M_{2g}(\mathbb{Z})\cap G_+~
\textrm{with}~\nu(\delta)=
\mathcal{N}(\mathfrak{d}).
\end{equation}
Furthermore, if we let $Z_\mathfrak{cd}^*$ be the CM-point associated with
$\{\mathbf{d}_1,\ldots,\mathbf{d}_{2g}\}$, then
we obtain
\begin{equation}\label{ZdZ}
Z_\mathfrak{cd}^*=(\delta^{-1})^T(Z_\mathfrak{c}^*)
\end{equation}
in a similar way to the argument in the proof of Proposition \ref{indep1}.
\par
Let $s=(s_p)_p$ be an idele of $K$ such that
\begin{equation}\label{idele}
\left\{\begin{array}{rccl}s_p&=&
1 & \textrm{if}~p\,|\,M,\\
s_p(\mathcal{O}_K)_p&=&\mathfrak{d}_p & \textrm{if}~p\nmid M.
\end{array}\right.
\end{equation}
If we set $\widetilde{\mathcal{D}}$ to be the ray class in $\mathrm{Cl}(\mathfrak{m})$ containing $\mathfrak{d}$, then we attain by (\ref{idele}) 
\begin{eqnarray}
[s,K]|_{K_\mathfrak{m}}&=&\sigma_\mathfrak{m}(\widetilde{\mathcal{D}}),\label{sKsD}\\
\mathfrak{g}(s)_p^{-1}(\mathcal{O}_{K^*})_p
&=&\mathcal{G}(\mathfrak{d})^{-1}_p\quad\textrm{for all rational primes $p$}.
\label{gsOscO}
\end{eqnarray}
It then follows from (\ref{hayay})$\sim$(\ref{gsOscO}) that for every rational prime $p$, the entries of each of the vectors
\begin{equation*}
r(\mathfrak{g}(s)^{-1})_p\left[\begin{matrix}b_1\\\vdots\\b_{2g}\end{matrix}\right]
\quad\textrm{and}\quad
(\delta^{-1})^T\left[\begin{matrix}b_1\\\vdots\\b_{2g}\end{matrix}\right]
\end{equation*}
form a basis of $\mathcal{G}(\mathfrak{c}\mathfrak{d})^{-1}_p=
\mathcal{G}(\mathfrak{c})^{-1}\mathcal{G}(\mathfrak{d})^{-1}_p$. So, there is a matrix $u=(u_p)_p\in\prod_p\mathrm{GL}_{2g}(\mathbb{Z}_p)$ satisfying
\begin{equation}\label{hgsud}
r(\mathfrak{g}(s)^{-1})=u(\delta^{-1})^T.
\end{equation}
Since $\delta^T$ and $\left[\begin{matrix}
I_g & O_g\\
O_g & \mathcal{N}(\delta)I_g
\end{matrix}\right]$
can be viewed as elements of $\mathrm{GSp}_{2g}(Z/M\mathbb{Z})$ by (\ref{deltain}),
there exists a matrix $\gamma\in\mathrm{Sp}_{2g}(\mathbb{Z})$ such that
\begin{equation}\label{dIrM}
\delta^T\equiv\left[\begin{matrix}
I_g & O_g\\
O_g & \mathcal{N}(\delta)I_g
\end{matrix}\right]\gamma\Mod{M\cdot M_{2g}(\mathbb{Z})}
\end{equation}
owing to the surjectivity of the reduction $\mathrm{Sp}_{2g}(\mathbb{Z})
\rightarrow\mathrm{Sp}_{2g}(\mathbb{Z}/M\mathbb{Z})$.
Since $r(\mathfrak{g}(s)^{-1})_p=I_{2g}$ for all $p\,|\,M$ by (\ref{idele}), we get
$u_p=\delta^T$ for all $p\,|\,M$ by (\ref{hgsud}). Hence we deduce by (\ref{dIrM}) that
\begin{equation}\label{urIMM}
u_p\gamma^{-1}\equiv\left[\begin{matrix}I_g&O_g\\
O_g&\mathcal{N}(\delta)I_g\end{matrix}\right]\Mod{M\cdot M_{2g}(\mathbb{Z}_p)}\quad
\textrm{for all rational primes $p$}.
\end{equation}
\par
On the other hand, we have by (\ref{aba}) and (\ref{bdd}) that
\begin{equation}\label{newmatrix}
\left[\begin{matrix}\mathbf{a}_1 & \cdots & \mathbf{a}_{2g}\end{matrix}\right]=
\left[\begin{matrix}\mathbf{b}_1 & \cdots & \mathbf{b}_{2g}\end{matrix}\right]\alpha=
(\left[\begin{matrix}\mathbf{b}_1 & \cdots & \mathbf{b}_{2g}\end{matrix}\right]\delta^{-1})(\delta\alpha)=
\left[\begin{matrix}\mathbf{d}_1 & \cdots & \mathbf{d}_{2g}\end{matrix}\right]
(\delta\alpha).
\end{equation}
Letting $\alpha=\left[\begin{matrix}A&B\\C&D\end{matrix}\right]$ we induce that
\begin{eqnarray*}
h_\mathfrak{f}(\mathcal{C})^{\sigma_\mathfrak{m}(\widetilde{\mathcal{D}})}
&=&h_\mathfrak{f}(\mathcal{C})^{[s,K]}\quad\textrm{by (\ref{sKsD})}\\
&=&h_{(1/N)\left[\begin{smallmatrix}B\\D\end{smallmatrix}\right]}(Z_\mathfrak{c}^*)^{[s,K]}\quad\textrm{by Definition \ref{invariant}}\\
&=&h_{(1/N)\left[\begin{smallmatrix}B\\D\end{smallmatrix}\right]}
(Z)^{\tau(r(\mathfrak{g}(s)^{-1}))}|_{Z=Z_\mathfrak{c}^*}\quad\textrm{by Lemma \ref{reciprocity} (ii)}\\
&=&h_{(1/N)\left[\begin{smallmatrix}B\\D\end{smallmatrix}\right]}
(Z)^{\tau(u(\delta^{-1})^T)}|_{Z=Z_\mathfrak{c}^*}\quad\textrm{by (\ref{hgsud})}\\
&=&h_{(1/N)\left[\begin{smallmatrix}B\\D\end{smallmatrix}\right]}
(Z)^{\tau(u\gamma^{-1})\tau(\gamma)\tau((\delta^{-1})^T)}|_{Z=Z_\mathfrak{c}^*}\\
&=&h_{(1/N)\left[\begin{smallmatrix}
I_g&O_g\\
O_g&\mathcal{N}(\delta)I_g\end{smallmatrix}\right]\left[\begin{smallmatrix}B\\D\end{smallmatrix}\right]}(Z)
^{\tau(\gamma)\tau((\delta^{-1})^T)}|_{Z=Z_\mathfrak{c}^*}
\quad\textrm{by (\ref{urIMM}) and (S3)}\\
&=&h_{(1/N)\gamma^T\left[\begin{smallmatrix}
I_g&O_g\\
O_g&\mathcal{N}(\delta)I_g\end{smallmatrix}\right]\left[\begin{smallmatrix}B\\D\end{smallmatrix}\right]}(Z)
^{\tau((\delta^{-1})^T)}|_{Z=Z_\mathfrak{c}^*}
\quad\textrm{by (S3)}\\
&=&h_{(1/N)\delta\left[\begin{smallmatrix}B\\D\end{smallmatrix}\right]}(Z)
^{\tau((\delta^{-1})^T)}|_{Z=Z_\mathfrak{c}^*}
\quad\textrm{by (\ref{dIrM}) and (S2)}\\
&=&h_{(1/N)\delta\left[\begin{smallmatrix}B\\D\end{smallmatrix}\right]}
((\delta^{-1})^T(Z_\mathfrak{c}^*))\quad\textrm{due to the fact
$\delta\in G_+$ and (A1)}\\
&=&h_\mathfrak{f}(\mathcal{CD})\quad\textrm{by (\ref{ZdZ}), (\ref{newmatrix}) and Definition \ref{invariant}}.
\end{eqnarray*}
\par
In particular, suppose that $\mathfrak{d}=d\mathcal{O}_K$ for some
$d\in\mathcal{O}_K$ such that $d\equiv1\Mod{\mathfrak{f}}$. Then
$\mathcal{D}$ is the identity class of $\mathrm{Cl}(\mathfrak{f})$, and so the
above observation implies
that $\sigma_\mathfrak{m}(\widetilde{\mathcal{D}})$ leaves $h_\mathfrak{f}(\mathcal{C})$ fixed.
Therefore, we conclude that $h_\mathfrak{f}(\mathcal{C})$ lies in $K_\mathfrak{f}$.
\end{proof}

\begin{corollary}\label{Cor}
Let $H$ be a subgroup of
$\mathrm{Cl}(\mathfrak{f})$ defined by
\begin{eqnarray*}
H&=&\langle
\mathcal{D}\in\mathrm{Cl}(\mathfrak{f})~|~\textrm{$\mathcal{D}$ contains an integral ideal
$\mathfrak{d}$ of $K$ for which}\\ &&\hspace{2.2cm}\textrm{$\mathcal{G}(\mathfrak{d})=\mathfrak{g}(d)\mathcal{O}_{K^*}$
for some $d\in\mathcal{O}_{K}$ such that $\mathfrak{g}(d)\equiv1\Mod{N\mathcal{O}_{K^*}}$}
\rangle,
\end{eqnarray*}
and let $K_\mathfrak{f}^H$ be the fixed field of $H$.
If $h_\mathfrak{f}(\mathcal{C})$ is finite, then it belongs to $K_\mathfrak{f}^H$.
\end{corollary}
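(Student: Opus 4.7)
The plan is to reduce the claim to checking that each generator of $H$ fixes $h_\mathfrak{f}(\mathcal{C})$, and then to mirror the argument of Proposition \ref{choiceofbasis}. By Theorem \ref{main}, $h_\mathfrak{f}(\mathcal{C})\in K_\mathfrak{f}$ and $h_\mathfrak{f}(\mathcal{C})^{\sigma_\mathfrak{f}(\mathcal{D})}=h_\mathfrak{f}(\mathcal{CD})$ for every $\mathcal{D}\in\mathrm{Cl}(\mathfrak{f})$; since $\sigma_\mathfrak{f}$ is a group homomorphism and $H$ is a subgroup of $\mathrm{Cl}(\mathfrak{f})$, it suffices to show $h_\mathfrak{f}(\mathcal{CD})=h_\mathfrak{f}(\mathcal{C})$ for each generator $\mathcal{D}$ of $H$, i.e.\ for each $\mathcal{D}$ containing an integral ideal $\mathfrak{d}$ with $\mathcal{G}(\mathfrak{d})=\mathfrak{g}(d)\mathcal{O}_{K^*}$ and $\mathfrak{g}(d)\equiv 1\Mod{N\mathcal{O}_{K^*}}$ for some $d\in\mathcal{O}_K$.

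Fix such $\mathcal{D}$, $\mathfrak{d}$, and $d$, together with an integral ideal $\mathfrak{c}\in\mathcal{C}$ and a symplectic basis $\{\mathbf{b}_j=\Psi(b_j)\}$ of $(\mathbb{C}^g/\Psi(\mathcal{G}(\mathfrak{c})^{-1}),E_{\xi\mathcal{N}(\mathfrak{c})})$ as in (\ref{aba}). Set $\mathbf{d}_j=\Psi(\mathfrak{g}(d)^{-1}b_j)$. Because $\mathcal{G}(\mathfrak{cd})=\mathcal{G}(\mathfrak{c})\mathcal{G}(\mathfrak{d})=\mathfrak{g}(d)\mathcal{G}(\mathfrak{c})$, the family $\{\mathbf{d}_j\}$ is a $\mathbb{Z}$-basis of $\Psi(\mathcal{G}(\mathfrak{cd})^{-1})$; the main subtlety is to check that it is symplectic for $E_{\xi\mathcal{N}(\mathfrak{cd})}$. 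By (\ref{Tr}), the $(i,j)$-entry of the Gram matrix is $\mathrm{Tr}_{K^*/\mathbb{Q}}(\xi\mathcal{N}(\mathfrak{cd})\mathfrak{g}(d)^{-1}\overline{\mathfrak{g}(d)}^{-1}b_i\overline{b_j})$. Two identities collapse this expression to the desired $\eta_g$: the type-norm identity $\mathfrak{g}(d)\overline{\mathfrak{g}(d)}=\mathrm{N}_{K/\mathbb{Q}}(d)$, which holds because $\{\varphi_i,\overline{\varphi_i}\}_{i=1}^n$ runs through all embeddings of $K$ into $\mathbb{C}$; and $\mathcal{N}(\mathfrak{d})=\mathrm{N}_{K/\mathbb{Q}}(d)$, obtained by comparing $\mathcal{G}(\mathfrak{d})\overline{\mathcal{G}(\mathfrak{d})}=\mathcal{N}(\mathfrak{d})\mathcal{O}_{K^*}$ (a direct product-of-conjugates computation at the level of $\mathcal{O}_L$) with $\mathfrak{g}(d)\overline{\mathfrak{g}(d)}\mathcal{O}_{K^*}$. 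Since $\mathbf{d}_j=T\mathbf{b}_j$ for the diagonal matrix $T=\mathrm{diag}((\mathfrak{g}(d)^{-1})^{\psi_l})$, the derivation of (\ref{Zc'}) yields $Z_\mathfrak{cd}^*=Z_\mathfrak{c}^*$.

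Finally, let $\alpha'=\left[\begin{smallmatrix}A'&B'\\C'&D'\end{smallmatrix}\right]\in M_{2g}(\mathbb{Z})$ express the $\mathbf{a}_j$'s in the new basis $\{\mathbf{d}_j\}$, and let $\alpha=\left[\begin{smallmatrix}A&B\\C&D\end{smallmatrix}\right]$ be as in (\ref{aba}). The hypothesis $\mathfrak{g}(d)\equiv 1\Mod{N\mathcal{O}_{K^*}}$, combined with $\mathcal{O}_{K^*}\subseteq\mathcal{G}(\mathfrak{c})^{-1}$ coming from the integrality of $\mathfrak{c}$, gives $\mathfrak{g}(d)\in 1+N\mathcal{G}(\mathfrak{c})^{-1}$, and the computation leading to (\ref{NaNa'}) then shows $\alpha'\equiv\alpha\Mod{N\cdot M_{2g}(\mathbb{Z})}$. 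By Definition \ref{invariant} and property (S2) of a Siegel family,
\begin{equation*}
h_\mathfrak{f}(\mathcal{CD})=h_{(1/N)\left[\begin{smallmatrix}B'\\D'\end{smallmatrix}\right]}(Z_\mathfrak{cd}^*)=h_{(1/N)\left[\begin{smallmatrix}B\\D\end{smallmatrix}\right]}(Z_\mathfrak{c}^*)=h_\mathfrak{f}(\mathcal{C}),
\end{equation*}
as required. The only nontrivial new ingredient beyond Proposition \ref{choiceofbasis} is the norm identity $\mathcal{N}(\mathfrak{d})=\mathrm{N}_{K/\mathbb{Q}}(d)$, since now $\mathfrak{d}$ need not be principal in $K$; once that is in place, the remainder of the argument runs in parallel, with $\mathfrak{cd}$ and $\mathfrak{g}(d)$ playing the roles of $\mathfrak{c}'$ and $\mathfrak{g}(1+a)$.
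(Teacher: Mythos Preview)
Your argument is correct and follows essentially the same route as the paper: both proofs reduce, via Theorem~\ref{main}, to showing that a generator $\mathcal{D}$ of $H$ acts trivially, and then re-run the computation of Proposition~\ref{choiceofbasis} with $\mathfrak{g}(d)$ in the role of $\mathfrak{g}(1+a)$. The only cosmetic difference is that the paper first reduces to the identity class $\mathcal{C}_0$ (noting that $K(h_\mathfrak{f}(\mathcal{C}_0))$ contains every $h_\mathfrak{f}(\mathcal{C})$), whereas you work directly with an arbitrary $\mathcal{C}$; your version is slightly more direct. You are also more explicit than the paper about the identity $\mathcal{N}(\mathfrak{d})=\mathrm{N}_{K/\mathbb{Q}}(d)$, which the paper hides inside the one-line assertion that the two principally polarized abelian varieties coincide; this identity is indeed the one point that goes beyond Proposition~\ref{choiceofbasis}, and your derivation of it from $\mathcal{G}(\mathfrak{d})\overline{\mathcal{G}(\mathfrak{d})}=\mathcal{N}(\mathfrak{d})\mathcal{O}_{K^*}$ (already used in the proof of Lemma~\ref{also}) together with $\mathfrak{g}(d)\overline{\mathfrak{g}(d)}=\mathrm{N}_{K/\mathbb{Q}}(d)$ is the right one.
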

\begin{proof}
Let $\mathcal{C}_0$ be the identity class of $\mathrm{Cl}(\mathfrak{f})$.
Since $h_\mathfrak{f}(\mathcal{C}_0)\in K_\mathfrak{f}$ by Theorem \ref{main}, $K(h_\mathfrak{f}(\mathcal{C}_0))$ is a Galois extension of $K$
as a subfield of $K_\mathfrak{f}$.
Furthermore, since
\begin{equation*}
h_\mathfrak{f}(\mathcal{C}_0)^{\sigma_\mathfrak{f}(\mathcal{C})}=
h_\mathfrak{f}(\mathcal{C}_0\mathcal{C})=
h_\mathfrak{f}(\mathcal{C})
\end{equation*}
by Theorem \ref{main},
$K(h_\mathfrak{f}(\mathcal{C}_0))$ contains $h_\mathfrak{f}(\mathcal{C})$.
Thus it suffices to show that $h_\mathfrak{f}(\mathcal{C}_0)$ belongs to $K_\mathfrak{f}^H$.
\par
To this end, let $\mathcal{D}$ be an element of $\mathrm{Cl}(\mathfrak{f})$
containing an integral ideal $\mathfrak{d}$ of $K$ for which
\begin{equation*}
\mathcal{G}(\mathfrak{d})=\mathfrak{g}(d)\mathcal{O}_{K^*}\quad
\textrm{for some}~d\in\mathcal{O}_K~\textrm{such that}~
\mathfrak{g}(d)\equiv1\Mod{N\mathcal{O}_{K^*}}.
\end{equation*}
Now that
\begin{equation*}
(\mathbb{C}^g/\Psi(\mathcal{G}(\mathfrak{d})^{-1}),
E_{\xi\mathcal{N}(\mathfrak{d})})=
(\mathbb{C}^g/\Psi(\mathfrak{g}(d)^{-1}\mathcal{O}_{K^*}),
E_{\xi\mathcal{N}(d\mathcal{O}_K)}),
\end{equation*}
we obtain
\begin{equation*}
h_\mathfrak{f}(\mathcal{C}_0)^{\sigma_\mathfrak{f}(\mathcal{D})}
=h_\mathfrak{f}(\mathcal{D})=h_\mathfrak{f}([d\mathcal{O}_K]),
\end{equation*}
where $[\mathfrak{a}]$ is the ray class containing $\mathfrak{a}$ for
a fractional ideal $\mathfrak{a}$ of $K$.
Moreover, since $\mathfrak{g}(d)\equiv1\Mod{N\mathcal{O}_{K^*}}$, we achieve 
\begin{equation*}
h_\mathfrak{f}([d\mathcal{O}_K])=
h_\mathfrak{f}([\mathcal{O}_K])=
h_\mathfrak{f}(\mathcal{C}_0)
\end{equation*}
in like manner as in the proof of Proposition \ref{choiceofbasis}. This proves that
$h_\mathfrak{f}(\mathcal{C}_0)$ belongs to $K_\mathfrak{f}^H$.
\end{proof}

\bibliographystyle{amsplain}

\address{
Department of Mathematical Sciences \\
KAIST \\
Daejeon 34141\\
Republic of Korea} {jkkoo@math.kaist.ac.kr}
\address{
Department of Mathematics\\
Hankuk University of Foreign Studies\\
Yongin-si, Gyeonggi-do 17035\\
Republic of Korea} {dhshin@hufs.ac.kr}
\address{
Department of Mathematical Sciences \\
KAIST \\
Daejeon 34141\\
Republic of Korea} {math\_dsyoon@kaist.ac.kr}

\end{document}